\definecolor{backgroundcolor}{rgb}{1,1,0.8}
\numberwithin{equation}{section}
\renewcommand{\AA}{\mathbb A}
\newcommand{\CC}{\mathbb C}
\newcommand{\FF}{\mathbb F}
\newcommand{\GG}{\mathbb G}
\newcommand{\QQ}{\mathbb Q}
\newcommand{\ZZ}{\mathbb Z}
\newcommand{\OO}{\mathcal O}
\newcommand{\calH}{\mathcal H}
\newcommand{\calP}{\mathcal P}
\newcommand{\calS}{\mathcal S}
\newcommand{\p}{\mathfrak p}
\newcommand{\frakq}{\mathfrak q}
\def\ab{{\operatorname{ab}}}
\def\Spec{\operatorname{Spec}} 
\def\Gal{\operatorname{Gal}}
\def \GL {\operatorname{GL}}
\def\Aut{\operatorname{Aut}} 
\def\Frob{\operatorname{Frob}}
\newcommand{\tors}{{\operatorname{tors}}}
\def\Tr{\operatorname{Tr}}
\def\cl{\operatorname{cl}}
\def\bbar#1{\setbox0=\hbox{$#1$}\dimen0=.2\ht0 \kern\dimen0 
\overline{\kern-\dimen0 #1}}
\newcommand{\Qbar}{{\overline{\mathbb Q}}} 
\newcommand{\Kbar}{\bbar{K}}
\newcommand{\FFbar}{\overline{\FF}}
\newtheorem{thm}{Theorem}[section]
\newtheorem{lemma}[thm]{Lemma}
\newtheorem{prop}[thm]{Proposition}
\newtheorem*{conj}{Conjecture}
\theoremstyle{definition}
\theoremstyle{remark}
\newtheorem{remark}[thm]{Remark}
\newenvironment{alphenum}{\hfill \begin{enumerate} }{\end{enumerate}}
\definecolor{webcolor}{rgb}{0.8,0,0.2}
\definecolor{webbrown}{rgb}{.6,0,0}
\begin{document}
\title[Abelian varieties over large algebraic fields with infinite torsion]{Abelian varieties over large algebraic fields with infinite torsion}

\author{David Zywina}
\address{Department of Mathematics, University of Pennsylvania, Philadelphia, PA 19104-6395, USA}
\email{zywina@math.upenn.edu}
\urladdr{http://www.math.upenn.edu/\~{}zywina}

\begin{abstract}
Let $A$ be an abelian variety of positive dimension defined over a number field $K$ and let $\Kbar$ be a fixed algebraic closure of $K$.    For each element $\sigma$ of the absolute Galois group $\Gal(\Kbar/K)$, let $\Kbar(\sigma)$ be the fixed field of $\sigma$ in $\Kbar$.   We shall prove that the torsion subgroup of $A(\Kbar(\sigma))$ is infinite for all $\sigma\in \Gal(\Kbar/K)$ outside of some set of Haar measure zero.     This proves the number field case of a conjecture of Geyer and Jarden from 1978.
\end{abstract}

\subjclass[2000]{Primary 14K15; Secondary 11F80}  
\keywords{Torsion of abelian varieties, Galois representations}

\maketitle

\section{Introduction}

Let $A$ be an abelian variety of positive dimension defined over a number field $K$.   The Mordell-Weil group $A(K)$ is finitely generated while the group $A(\Kbar)$, with $\Kbar$ a fixed algebraic closure of $K$, has infinite rank and infinitely many torsion points.   It is interesting to bridge this gap and study the structure of the groups $A(L)$ for various large algebraic extensions $L$ of $K$.   For example, the group $A(K^\ab)$ has finite torsion if and only if $A$ has no abelian subvarieties with complex multiplication over $K$, where $K^\ab$ is the maximal abelian extension of $K$ (\cite{MR885780}).

Let $\Gal_K$ be the absolute Galois group $\Gal(\Kbar/K)$.   Fix an integer $e\geq 1$.   The group $\Gal_K^e$ is profinite and is thus equipped with a unique Haar measure $\mu_K$ for which $\mu_K(\Gal_K^e)=1$.     For each $\sigma=(\sigma_1,\ldots,\sigma_e)\in \Gal_K^e$, let $\Kbar(\sigma)$ be the fixed field of $\sigma_1,\ldots, \sigma_e$ in $\Kbar$.   In this paper, we will consider the fields $\Kbar(\sigma)$ for almost all $\sigma$ in $\Gal_K^e$.   By ``almost all'', we mean for all $\sigma \in \Gal_K^e$ outside of some set with Haar measure $0$.   For almost all $\sigma\in \Gal_K^e$, the field $\Kbar(\sigma)$ is is {pseudo-algebraically closed} \cite{MR868860}*{Theorem 16.18} (i.e., every absolutely irreducible variety defined over $\Kbar({\sigma})$ has a $\Kbar({\sigma})$-rational point) and the absolute Galois group $\Gal_{\Kbar(\sigma)}$ is isomorphic to the free profinite group on $e$ generators \cite{MR868860}*{Theorem 16.13}.

Frey and Jarden showed that the group $A(\Kbar(\sigma))$ has infinite rank for almost all $\sigma\in \Gal_K^e$ \cite{MR0337997}*{Theorem 9.1}.  We will thus focus on the torsion points of $A(\Kbar(\sigma))$.    Jacobson and Jarden showed that if $e\geq 2$, then $A(\Kbar(\sigma))_{\tors}$ is finite for almost all $\sigma\in \Gal_K$ \cite{MR1831454}.   Our main theorem deals with the remaining case $e=1$.

\begin{thm} \label{T:main}
Let $A$ be an abelian variety of positive dimension defined over a number field $K$.   For all $\sigma\in \Gal_K$ outside a set of Haar measure zero, the group of torsion points in $A(\Kbar(\sigma))$ is infinite.
\end{thm}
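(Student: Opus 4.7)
The plan is to show that for almost every $\sigma$ there are infinitely many primes $\ell$ for which $\sigma$ fixes a nonzero $\ell$-torsion point of $A$; this immediately gives infinite torsion in $A(\Kbar(\sigma))$. For each prime $\ell$ let
\[
E_\ell \;=\; \{\sigma\in\Gal_K : A(\Kbar(\sigma))[\ell]\neq 0\} \;=\; \rho_\ell^{-1}(S_\ell),
\]
where $\rho_\ell\colon\Gal_K\to\GL(A[\ell])$ is the mod-$\ell$ representation and $S_\ell\subset\GL(A[\ell])$ is the (conjugation-invariant) subset of elements having $1$ as an eigenvalue. Writing $G_\ell := \rho_\ell(\Gal_K)$, one has $\mu_K(E_\ell)=|S_\ell\cap G_\ell|/|G_\ell|$, and the theorem reduces to showing $\mu_K(\limsup_\ell E_\ell)=1$.

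The argument splits into two main ingredients. \emph{(i) Lower bound.} I would show $\mu_K(E_\ell)\geq c/\ell$ for some $c=c(A)>0$ and all sufficiently large $\ell$. By Faltings's isogeny theorem together with Serre's results on $\ell$-adic representations, for all but finitely many $\ell$ the image $G_\ell$ contains the $\FF_\ell$-points of a fixed reductive $\ZZ[1/N]$-group scheme $H\subseteq\GL_{2g}$ (the integral monodromy group of $A$). The locus $\{g\in H:\det(g-1)=0\}$ is a hypersurface, and Lang--Weil-type counts on $H(\FF_\ell)$ (or, in Chebotarev form, the fact that $1$ is an eigenvalue of $\Frob_\p$ mod $\ell$ if and only if $\ell\mid |A(k_\p)|$, for which one has unconditional lower bounds on the density of such $\p$) yield the claim. \emph{(ii) Near-independence.} For distinct primes $\ell\neq\ell'$ the events $E_\ell,E_{\ell'}$ are quasi-independent: by Faltings plus Serre's theorem on the ``independence of $\ell$'', the fields $K(A[\ell])$ and $K(A[\ell'])$ are linearly disjoint over a fixed finite extension $L/K$ for all sufficiently large $\ell,\ell'$, so conditional on the restriction $\sigma|_L$ the events $\{E_\ell\}_{\ell\gg 0}$ are mutually independent.

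Combining (i) and (ii), I would apply the classical Borel--Cantelli lemma conditionally on each coset of $\Gal_L$ in $\Gal_K$: since $\sum_\ell 1/\ell = \infty$, conditional independence and the divergent lower bound from (i) force $\mu_K(\limsup E_\ell \mid \sigma|_L)=1$ for every coset, hence $\mu_K(\limsup E_\ell)=1$, finishing the proof. The main obstacle is step (i): obtaining $\mu_K(E_\ell)\gtrsim 1/\ell$ uniformly in $\ell$ demands a precise description of the mod-$\ell$ image $G_\ell$ for every large prime and of the ``eigenvalue-$1$'' hypersurface inside $H$. This is most delicate when $A$ has extra endomorphisms (e.g.\ CM factors), because $H$ can then be a product involving nontrivial tori and smaller classical groups; the natural route is to decompose $A$ up to isogeny over a suitable finite extension of $K$ into simple isotypic pieces and verify the $c/\ell$-density bound for each resulting reductive group type.
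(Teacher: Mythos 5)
Your overall architecture is the same as the paper's (a $c/\ell$ lower bound for the eigenvalue-$1$ event, independence of the mod-$\ell$ representations over a fixed finite extension $L$, and the second Borel--Cantelli lemma applied coset by coset), but there is a genuine structural gap in step (i). You prove (or assert) only the unconditional bound $\mu_K(E_\ell)\geq c/\ell$ on all of $\Gal_K$, yet your Borel--Cantelli step is applied \emph{conditionally on each coset} $\beta\Gal_L$, and for that you need the conditional measure $\mu(E_\ell\mid \beta\Gal_L)\geq c/\ell$ for \emph{every} coset $\beta\Gal_L$ and every large $\ell$ in your set of primes. A global average bound does not supply this: a priori the coset $\rho_{A,\ell}(\beta)\rho_{A,\ell}(\Gal_L)$ could contain few, or even no, matrices with eigenvalue $1$ for a fixed $\beta$, and then $\limsup E_\ell$ could miss that coset entirely while $\mu_K(E_\ell)\geq c/\ell$ still holds. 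This per-coset bound is exactly Proposition~\ref{P:key}(\ref{a:lower}) and is the heart of the paper: one chooses, for each conjugacy class $C$ of $\Gal(L/K)$, a prime $\p_C$ with $(\p_C,L/K)=C$, restricts to primes $\ell$ for which $P_{A,\p_C}(x^N)$ splits with the maximal number $d_C$ of distinct roots in $\FF_\ell$ (and $\ell$ splits in an auxiliary field $M$ making $H_\ell$ split), and then counts points on $\{t\in T:\det(I-Bt^N)=0\}$ for split maximal tori $T$ with $B$ in the Frobenius class of $\p_C$.

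Two of your supporting claims are also stronger than what is known, and the gap they hide is precisely where the work lies. First, it is not known in general that $\rho_{A,\ell}(\Gal_K)$ \emph{contains} $H(\FF_\ell)$ for a fixed reductive group scheme $H$ over $\ZZ[1/N]$; Serre's theorem (Theorem~\ref{T:Serre}) gives the reverse containment $\rho_{A,\ell}(\Gal_L)\subseteq H_\ell(\FF_\ell)$ with index dividing $N$, for groups $H_\ell$ defined only over $\FF_\ell$, and the paper has to work with $N$-th powers ($Bt^N$) to stay inside the image. Second, a Lang--Weil count of the hypersurface $\det(g-1)=0$ (or of its intersection with a coset) only yields the main term $\ell^{\dim-1/2}$-close to $m\ell^{\dim}$ if one knows the top-dimensional geometric components are defined over $\FF_\ell$ with $m\geq 1$; this rationality is not automatic and is established in the paper via Lemmas~\ref{L:rational fiber} and~\ref{L:component}, using the Chebotarev-chosen prime $\p_C$ again, together with a point-counting theorem with constants uniform in $\ell$ and $B$ (Theorem~\ref{T:counting}, via Katz's Betti number bounds). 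Your parenthetical alternative --- ``unconditional lower bounds on the density of $\p$ with $\ell\mid |A(k_\p)|$'' --- is circular: by Chebotarev that density \emph{is} $\mu_K(E_\ell)$, so no such uniform $\gg 1/\ell$ bound can be quoted; it is what must be proved. So while your skeleton is right, the proposal presupposes the two hardest ingredients (the per-coset lower bound and the rationality/uniformity needed for the point count) rather than proving them.
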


Since there are only countable many abelian varieties defined over $K$, the set of measure zero in Theorem~\ref{T:main} can actually be chosen independent of $A$.

For each positive integer $m$ and field extension $L/K$, let $A(L)[m]$ be the $m$-torsion subgroup of $A(L)$.   Jacobson and Jarden have also shown that for almost all $\sigma\in \Gal_K$, the group $A(\Kbar(\sigma))[\ell^\infty]:=\bigcup_{n\geq1} A(\Kbar(\sigma))[\ell^n]$ is finite for all rational primes $\ell$ \cite{MR1831454}.   So to prove Theorem~\ref{T:main}, we will need to demonstrate that for almost all $\sigma\in \Gal_K$, the group $A(\Kbar(\sigma))[\ell]$ is non-zero for infinitely many primes $\ell$.

A weaker version of Theorem~\ref{T:main} was proved by Geyer and Jarden in \cite{MR2111902} where they first needed to replace $K$ by some finite extension (which may depend on $A$).  Our theorem, with the earlier results cited above, completes the proof of the following conjecture of Geyer and Jarden in the case where $K$ is a number field, see \cite{MR516151}.

\begin{conj}[Geyer-Jarden] Let $A$ be an abelian variety of positive dimension defined over a finitely generated field $K$ and let $e$ be a positive integer.  Then for almost all $\sigma\in \Gal_K^e$, we have:
\begin{alphenum}
\item \label{C:GJa} If $e=1$, then $A(\Kbar(\sigma))_{\tors}$ is infinite.
\item \label{C:GJb} If $e\geq 2$, then $A(\Kbar(\sigma))_{\tors}$ is finite.
\item \label{C:GJc} The group $A(\Kbar(\sigma))[\ell^\infty]$ is finite for each prime $\ell$.
\end{alphenum}
\end{conj}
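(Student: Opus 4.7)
For each rational prime $\ell$, let $\rho_\ell\colon \Gal_K \to \Aut(A[\ell])$ be the mod-$\ell$ Galois representation, set $G_\ell := \rho_\ell(\Gal_K)$, and
\[
Y_\ell \;:=\; \{\sigma\in\Gal_K : A(\Kbar(\sigma))[\ell]\neq 0\} \;=\; \{\sigma : \rho_\ell(\sigma) \text{ fixes some nonzero vector in }A[\ell]\}.
\]
By the Jacobson--Jarden result recalled above, for almost all $\sigma\in\Gal_K$ the group $A(\Kbar(\sigma))[\ell^\infty]$ is finite for every $\ell$; for such a $\sigma$, the torsion subgroup $A(\Kbar(\sigma))_{\tors}$ is infinite if and only if $\sigma\in Y_\ell$ for infinitely many primes $\ell$. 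The plan is therefore to verify the hypotheses of a quantitative (second-moment) Borel--Cantelli lemma for the family $\{Y_\ell\}_\ell$ and conclude that $\mu_K(\limsup_\ell Y_\ell)=1$.

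First I would handle the single-prime density. Since $\rho_\ell$ is continuous and surjective onto $G_\ell$, it pushes $\mu_K$ forward to the uniform probability measure on $G_\ell$, so $\mu_K(Y_\ell)$ equals $|\{g\in G_\ell : \det(g-1)=0\}|/|G_\ell|$. The goal is a uniform lower bound $\mu_K(Y_\ell)\geq c/\ell$ for some $c=c(A,K)>0$ and all sufficiently large $\ell$. The input is that $G_\ell$ sits as a subgroup of bounded index inside the $\FF_\ell$-points of a fixed reductive group scheme associated to $A$ (essentially the integral Mumford--Tate group), by Faltings's open-image theorem and its consequences for the mod-$\ell$ reductions. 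A conjugacy-class count in this reductive group, weighted by the dimension of the unipotent radical of the parabolic fixing a line, then produces the claimed density of order $1/\ell$. Summing over primes gives $\sum_\ell \mu_K(Y_\ell)=\infty$.

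Second, I would establish pairwise quasi-independence: for distinct large primes $\ell\neq\ell'$,
\[
\mu_K(Y_\ell\cap Y_{\ell'}) \;\leq\; \bigl(1+O(1/\ell+1/\ell')\bigr)\,\mu_K(Y_\ell)\,\mu_K(Y_{\ell'}).
\]
Via Goursat's lemma this reduces to identifying all common finite quotients of $G_\ell$ and $G_{\ell'}$; for $A$ in generic position, the only obstruction is essentially the cyclotomic character, whose mod-$\ell$ and mod-$\ell'$ reductions factor through disjoint cyclotomic extensions of $K$. With divergence and pairwise quasi-independence in hand, the Erd\H{o}s--R\'enyi / Kochen--Stone form of the Borel--Cantelli lemma yields $\mu_K(\limsup_\ell Y_\ell)=1$, completing the proof.

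The main obstacle is making the single-prime lower bound uniform in $\ell$ and unconditional in $A$. After reducing to the case of a simple abelian variety, one must analyze $G_\ell$ according to the division algebra $\End^0(A)$ and the corresponding Mumford--Tate datum; the CM and real-multiplication cases are the most delicate, since there $G_\ell$ can be close to abelian and the characters entering its description might conspire to suppress the frequency of the eigenvalue~$1$. A parallel refinement is needed in the quasi-independence step whenever the endomorphism algebra of $A$ creates common quotients of $\rho_\ell$ and $\rho_{\ell'}$ beyond the cyclotomic character, so that the Goursat obstruction still contributes only a factor of $1+o(1)$ to the asymptotic product.
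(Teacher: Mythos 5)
Your overall architecture (reduce to part (\ref{C:GJa}) via Jacobson--Jarden, then Borel--Cantelli over the events $Y_\ell$ with a per-prime lower bound of order $1/\ell$ plus an independence statement) matches the paper, but both key steps have genuine gaps. First, the claim $\mu_K(Y_\ell)\geq c/\ell$ for \emph{all} sufficiently large $\ell$ is false in general: for a CM elliptic curve and a prime $\ell$ inert in the CM field, $G_\ell$ lies in the normalizer of a nonsplit Cartan, and the only element of that Cartan with eigenvalue $1$ is the identity, so $\mu_K(Y_\ell)$ is of order $1/\ell^2$ and the sum over such $\ell$ converges. The paper therefore proves the bound only for a positive-density set $\calS$ of primes, chosen so that $\ell$ splits completely in an auxiliary number field $M$; this forces Serre's reductive group $H_\ell$ to be \emph{split} and forces a chosen Frobenius characteristic polynomial to have the maximal number of distinct roots in $\FF_\ell$. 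Moreover the actual source of the $1/\ell$ bound is not a conjugacy-class count weighted by a parabolic: it is a point count (via the Katz/Deligne estimate of Theorem~\ref{T:counting}) of the hypersurface $\det(I-Bt^N)=0$ inside a split maximal torus $T$, and the delicate point is showing that the top-dimensional components of this hypersurface are defined over $\FF_\ell$ (Lemmas~\ref{L:rational fiber} and~\ref{L:component}); without that rationality the count could be much smaller than $\ell^{r-1}$. Your sketch engages with neither the splitness restriction nor the rationality of components, which is exactly where the difficulty sits, and it also counts in the full reductive group rather than in the bounded-index image $G_\ell$ (let alone in a prescribed coset).

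Second, the measure-one step. Pairwise quasi-independence with factor $1+O(1/\ell+1/\ell')$ over $\Gal_K$ is not available in general: the fields $K(A[\ell])$ can all contain one fixed nontrivial finite extension $L/K$ (e.g.\ the field of definition of extra endomorphisms, or the connectedness field of the $\ell$-adic monodromy), a common quotient that is not cyclotomic and does not shrink as $\ell,\ell'\to\infty$. The correlation is then bounded away from $1$ by a constant depending on $[L:K]$, and Kochen--Stone yields only $\mu_K(\limsup_\ell Y_\ell)\geq 1/C>0$, not $1$, with no evident zero-one law to upgrade it. The paper's remedy is precisely to pass to such an $L$ (Theorem~\ref{T:Serre}), over which $\prod_{\ell\in\calS}\rho_{A,\ell}$ is surjective onto the product of its images --- genuine, not approximate, independence --- and, crucially, to prove the $c/\ell$ lower bound on \emph{every coset} $\beta\Gal_L$ (this is the role of the elements $Bt^N$, using that $[H_\ell(\FF_\ell):\rho_{A,\ell}(\Gal_L)]$ divides $N$), so that the ordinary second Borel--Cantelli lemma gives conditional measure $1$ coset by coset. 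Your plan needs both repairs (restriction to a split positive-density set of primes with the component-rationality argument, and the coset-wise bound over $L$ in place of quasi-independence over $K$) to go through; as written it proves at most that the $\limsup$ set has positive measure, and only for abelian varieties without the CM/extra-endomorphism obstructions you yourself flag. Finally, note that the conjecture is stated for finitely generated fields, while your inputs (Faltings/Serre open-image results as you use them) are for number fields, so at best you are addressing the number field case, as the paper does.
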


Geyer and Jarden made this conjecture after proving if for the special case of an elliptic curve.  Following the approach of our main theorem, one should be able to prove this conjecture in the case where $K$ is a general finitely generated field of characteristic 0 (parts (\ref{C:GJb}) and (\ref{C:GJc}) are already known).  The only thing stopping us from doing so is the lack of a convenient reference for the image of Galois representations over such fields.

\subsection{Galois representations}
Throughout this section, we will let $A$ be an abelian variety of dimension $g\geq 1$ defined over a number field $K$.     For each prime $\ell$, the group $A(\Kbar)[\ell]$ is isomorphic to $\FF_\ell^{2g}$ and has an action of $\Gal_K$ that respects the group structure.   This Galois action thus defines a Galois representation
\[
\rho_{A,\ell}\colon \Gal_K \to \Aut(A(\Kbar)[\ell])\cong \GL_{2g}(\FF_\ell).
\]
Observe that for $\sigma\in \Gal_K$, we have $A(\Kbar(\sigma))[\ell]\neq 0$ if and only if the matrix $\rho_{A,\ell}(\sigma) \in \GL_{2g}(\FF_\ell)$ has $1$ as an eigenvalue.   Theorem~\ref{T:main} will be a straightforward application of the following proposition.

\begin{prop}  \label{P:key}  Let $A$ be an abelian variety of positive dimension defined over a number field $K$.  Then there is a finite Galois extension $L/K$, a set $\calS$ of rational primes with positive density, and a positive constant $c$ such that that the following hold:
\begin{alphenum}
\item \label{a:lower} For each prime $\ell \in \calS$ and $\beta\in \Gal_K$, we have
\[
\frac{|\{h \in \rho_{A,\ell}(\beta\Gal_L) :  \det(I-h)= 0\}|}{|\rho_{A,\ell}(\beta\Gal_K)|} \geq  \frac{c}{\ell}.
\]
\item \label{b:independence} The homomorphism $\prod_{\ell \in \calS}\rho_{A,\ell}\colon \Gal_L \to {\displaystyle\prod_{\ell\in \calS} \rho_{A,\ell}(\Gal_L)}$ is surjective.

\end{alphenum}
\end{prop}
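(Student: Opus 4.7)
The plan is to combine known structural results on the images of the $\ell$-adic Galois representations attached to $A$ with a Lang--Weil-style point count on the eigenvalue-$1$ locus inside each image. First, I would select the finite Galois extension $L/K$ using Serre's theory of $\ell$-adic Galois representations for abelian varieties: by theorems of Serre, Bogomolov and Faltings, there exists a finite Galois extension $L/K$ and a finite set $T_0$ of primes such that, for every $\ell\notin T_0$, the image $\rho_{A,\ell}(\Gal_L)\subseteq \GL_{2g}(\FF_\ell)$ is a subgroup of uniformly bounded index in $\mathcal{G}_\ell(\FF_\ell)$, where $\mathcal{G}_\ell$ is a smooth connected reductive group scheme over $\FF_\ell$ modelling the mod-$\ell$ reduction of the $\ell$-adic algebraic monodromy group. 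Part~(\ref{b:independence}) would then follow from the ``almost independence'' of the family $\{\rho_{A,\ell}\}_\ell$ restricted to $\Gal_L$: by Serre's independence theorem (extended to arbitrary abelian varieties), the product homomorphism $\Gal_L\to \prod_{\ell}\rho_{A,\ell}(\Gal_L)$ is surjective once we restrict to primes outside a finite exceptional set $T_1$, so any $\mathcal S$ avoiding $T_0 \cup T_1$ satisfies~(\ref{b:independence}).

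For part~(\ref{a:lower}), fix $\beta\in\Gal_K$ and $\ell$ in a prospective set $\mathcal{S}$. The coset $\rho_{A,\ell}(\beta\Gal_L)$ is, up to bounded index, a coset $\rho_{A,\ell}(\beta)\cdot \mathcal{G}_\ell(\FF_\ell)$, which I would view as the $\FF_\ell$-points of a $\mathcal{G}_\ell$-torsor $X_\beta \subseteq \GL_{2g,\FF_\ell}$. The eigenvalue-$1$ condition cuts out the closed subscheme
\[
V_\beta \;=\; \{h\in X_\beta : \det(I-h)=0\}.
\]
If $V_\beta$ has codimension $1$ in $X_\beta$ and possesses a smooth $\FF_\ell$-point, then Lang--Weil gives $|V_\beta(\FF_\ell)|\gg \ell^{d-1}$, where $d=\dim\mathcal{G}_\ell$, and since $|X_\beta(\FF_\ell)|\sim\ell^d$, the proportion is $\gg 1/\ell$, as required. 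To enforce this uniformly in $\beta$, I would define $\mathcal{S}$ by restricting to primes $\ell\notin T_0\cup T_1$ that lie in a prescribed Chebotarev class of some auxiliary finite extension $M/K$, chosen so that for every coset $\beta\in\Gal(L/K)$ the form of the torsor $X_\beta$ becomes sufficiently split over $\FF_\ell$ to guarantee a rational $1$-eigenvector; the positive density of $\mathcal{S}$ is then immediate from Chebotarev applied to $M/K$.

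The main obstacle is precisely this uniform lower bound. While Lang--Weil handles the point count once the geometry is in place, the hypersurface $V_\beta$ can be empty of $\FF_\ell$-points (as happens for a CM elliptic curve at primes inert in the CM field, where the image lies in a non-split torus with no rational eigenvalue $1$), and ruling this out simultaneously for every $\beta$ is the technical heart of the argument. The delicate step is choosing $M$ so that every coset of $\Gal(L/K)$ acquires a rational $1$-eigenvector over $\FF_\ell$; this requires a careful analysis of the action of the finite group $\Gal(L/K)$ on the root datum of $\mathcal{G}_\ell$ and on the representation $A(\Kbar)[\ell]$ for varying $\ell$, and then transferring the resulting mod-$\ell$ statement back to an $\ell$-independent Chebotarev condition via the compatibility of the family $\{\rho_{A,\ell}\}$. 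The necessity of such a restriction is the reason $\mathcal{S}$ has only positive (and not full) density.
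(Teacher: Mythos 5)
Your high-level architecture (Serre's image theorem to choose $L$ and get part (\ref{b:independence}), then a uniform Lang--Weil-type count on the eigenvalue-$1$ locus inside the coset $\rho_{A,\ell}(\beta\Gal_L)$) is the same as the paper's, but there is a genuine gap exactly where you flag ``the technical heart'': you never actually produce, uniformly in $\beta$ and for a positive-density set of $\ell$, a rational point (equivalently, an $\FF_\ell$-rational top-dimensional component) of the hypersurface $\{h\in X_\beta:\det(I-h)=0\}$. Saying that an auxiliary field $M$ should be ``chosen so that every coset acquires a rational $1$-eigenvector'', via an unspecified analysis of the $\Gal(L/K)$-action on the root datum, restates the problem rather than solving it; in particular, splitness of the ambient reductive group $H_\ell$ over $\FF_\ell$ does not by itself force the determinantal hypersurface to meet the particular coset $\rho_{A,\ell}(\beta\Gal_L)$ in smooth rational points, and no Chebotarev condition is exhibited that provably achieves this for every class $\beta$ simultaneously.

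The paper supplies a concrete mechanism that your sketch would need. Since $[H_\ell(\FF_\ell):\rho_{A,\ell}(\Gal_L)]$ divides $N$, every element $Bt^N$ with $t$ in a split maximal torus $T$ of $H_\ell$ lies in the coset, so it suffices to count rational points on $W=\{t\in T:\det(I-Bt^N)=0\}$; crucially, the base point $B$ may be chosen conjugate to $\rho_{A,\ell}(\Frob_{\p_C})$, where $\p_C$ is a prime whose Frobenius class in $\Gal(L/K)$ equals that of $\beta$ and which maximizes the number $d_C$ of distinct roots of $P_{A,\p}(x^N)$, and $\calS$ is cut out by the $\ell$-independent conditions that $\ell$ splits completely in $M$ (so $H_\ell$ is split, Lemma~\ref{L:split torus}) and that $P_{A,\p_C}(x^N)$ splits completely mod $\ell$. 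One then studies the finite map $\varphi\colon W\to T/\GG_m$, whose fiber over $\bar t$ records the distinct roots of $\det(x^NI-Bt^N)$: a Chebotarev argument applied mod $\ell$ shows its degree is again $d_C$ (Lemma~\ref{L:rational fiber}), so the entire fiber over the identity is rational, which forces every top-dimensional component of $W$ to be defined over $\FF_\ell$ (Lemma~\ref{L:component}); only then does the uniform counting theorem (Theorem~\ref{T:counting}) give $|W(\FF_\ell)|\geq \ell^{r-1}+O(\ell^{r-3/2})$, yielding Lemma~\ref{L:technical}. Two further points your proposal leaves unaddressed, though they are secondary: uniformity of the point-count constants requires the relevant varieties to be defined by boundedly many equations of bounded degree independently of $\ell$ (Lemma~\ref{L:split torus}(\ref{I:split torus ii}), proved by lifting $H_\ell$ to characteristic zero), and to pass from the torus count back to the coset without overcounting one must restrict to $t$ with $t^N$ regular, bound the non-regular locus (Lemma~\ref{L:count irregular}), and sum over all split maximal tori, keeping track of the factors $N^r$ and $|W(H_\ell)|$.
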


Let us now explain how Theorem~\ref{T:main} follows from Proposition~\ref{P:key}.  We first define the measure $\mu=[L:K] \mu_K$ on $\Gal_K$, i.e., the Haar measure on $\Gal_K$ such that $\mu(\Gal_K)=[L:K]$.  Now fix any element $\beta \in \Gal_K$.  Since $\mu(\beta\Gal_L)=1$, we may view $\beta\Gal_L$ with measure $\mu$ as a probability space.  For each prime $\ell\in\calS$, define the set $U_{\ell} := \{\sigma \in \beta\Gal_L : A(\Kbar(\sigma))[\ell] \neq 0 \}$.    Since $A(\Kbar(\sigma))[\ell] \neq 0$ is equivalent to $\det(I-\rho_{A,\ell}(\sigma))=0$,  the set $U_\ell$ is thus $\mu$-measurable with
\[
\mu(U_\ell) = \frac{|\{h \in \rho_{A,\ell}(\beta\Gal_L) :  \det(I-h)= 0\}|}{|\rho_{A,\ell}(\beta\Gal_K)|}.
\]
Using Proposition~\ref{P:key}(\ref{b:independence}), we find that the map $\prod_{\ell\in\calS}\rho_{A,\ell}\colon \beta\Gal_L\to  \prod_{\ell\in \calS} \rho_{A,\ell}(\beta\Gal_L)$ is surjective, and thus the $U_\ell$ are $\mu$-independent subsets of $\beta\Gal_L$ (i.e., $\mu(\cap_{\ell \in I} U_\ell)=\prod_{\ell\in I} \mu(U_\ell)$ for any finite subset $I$ of $\calS$).   By Proposition~\ref{P:key}(\ref{a:lower}), we have
\[
 \sum_{\ell \in \calS} \mu(U_\ell) \geq c \sum_{\ell \in \calS} \frac{1}{\ell}=+\infty
\]
where the last equality uses that $\calS$ has positive density.   The second Borel-Cantelli lemma now implies that the set $\bigcap_{n=1}^\infty \bigcup_{\ell \geq n,\, \ell\in\calS} U_\ell$ has $\mu$-measure $1$.  Equivalently,
\[
\mu\big(\big\{ \sigma\in \beta\Gal_L : A(\Kbar(\sigma))[\ell]\neq 0 \text{ for infinitely many primes $\ell \in \calS$}\}\big) = 1.
\]
By combining the $[L:K]$ cosets of $\Gal_L$ in $\Gal_K$, we find that
\[
\mu\big(\big\{ \sigma\in \Gal_K : A(\Kbar(\sigma))[\ell]\neq 0 \text{ for infinitely many primes $\ell\in\calS$}\}\big) = [L:K].
\]
Theorem~\ref{T:main} follows by recalling that $\mu_K=[L:K]^{-1}\mu$.

\subsection*{Acknowledgements}
Special thanks to Moshe Jarden for introducing his and Geyer's conjecture to me and suggesting that I should try to study it.

\section{Counting points} \label{S:counting}
In this section, we give a quick application of the Weil conjectures.    The essential feature of the bound in the following theorem is its uniformity; its proof requires a bound for the sum of Betti numbers due to Katz (which builds on estimates of Bombieri).

\begin{thm} \label{T:counting}
Let $V\subseteq \AA^n_{\FF_q}$ with $n>1$ be a closed subvariety defined by the simultaneous vanishing of $r$ polynomials $f_1,\ldots, f_r$ in $\FF_q[x_1,\ldots,x_n]$, each of degree at most $d$.    Let $V_1,\ldots, V_m$ be the irreducible components of $V_{\FFbar_q}$ which have the same dimension as $V$.   Then 
\[
|V(\FF_q)| \leq m q^{\dim V} + 6 (3+rd)^{n+1}2^r q^{\dim V-1/2}.
\]
If the components $V_1,\ldots, V_m$ are all defined over $\FF_q$, then
\[
\Big|\, |V(\FF_q)| - m q^{\dim V} \Big| \leq 6 (3+rd)^{n+1}2^r q^{\dim V-1/2}.
\]
\end{thm}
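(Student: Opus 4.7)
My plan is to apply the Grothendieck--Lefschetz trace formula together with Deligne's Weil~II bounds, using the uniform Betti number estimate of Katz as a black box.

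First, fix an auxiliary prime $\ell \neq \operatorname{char}(\FF_q)$, and write $d = \dim V$. The Grothendieck--Lefschetz trace formula gives
\[
|V(\FF_q)| = \sum_{i=0}^{2d} (-1)^i \tr\bigl(\Frob_q \,\big|\, H^i_c(V_{\FFbar_q}, \QQ_\ell)\bigr),
\]
while Deligne's Weil~II estimates imply that each eigenvalue of $\Frob_q$ on $H^i_c(V_{\FFbar_q}, \QQ_\ell)$ has complex absolute value at most $q^{i/2}$; in particular $|\tr(\Frob_q \mid H^i_c)| \leq h^i_c \cdot q^{i/2}$, where $h^i_c = \dim_{\QQ_\ell} H^i_c(V_{\FFbar_q}, \QQ_\ell)$.

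Next I would isolate the top cohomology $H^{2d}_c$ to produce the main term. Let $V^{\mathrm{top}} = V_1 \cup \cdots \cup V_m$ denote the union of the top-dimensional components. Since the open complement $V \setminus V^{\mathrm{top}}$ has dimension $< d$, the excision long exact sequence identifies $H^{2d}_c(V_{\FFbar_q}, \QQ_\ell)$ with $\bigoplus_{i=1}^m H^{2d}_c(V_i, \QQ_\ell)$, and each summand is isomorphic to $\QQ_\ell(-d)$ via the fundamental class. Thus $\Frob_q$ acts on $H^{2d}_c$ as $q^d$ times the permutation representation coming from the Frobenius action on $\{V_1,\ldots,V_m\}$, so
\[
\tr\bigl(\Frob_q \mid H^{2d}_c\bigr) = m_0 \cdot q^d,
\]
where $m_0 \leq m$ counts the $V_i$ that are defined over $\FF_q$ (i.e.\ fixed by $\Frob_q$). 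Separating this top term and applying the Weil~II estimate to the remaining cohomology yields
\[
\bigl| |V(\FF_q)| - m_0 q^d \bigr| \leq \Bigl(\sum_{i < 2d} h^i_c \Bigr)\, q^{d - 1/2}.
\]
Using $m_0 \leq m$ on one side gives the first bound; in the case that all $V_i$ are already defined over $\FF_q$ one has $m_0 = m$ and gets the symmetric two-sided bound.

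Finally, to convert this into the explicit form stated, I would invoke Katz's uniform bound (refining deformation estimates of Bombieri): for any closed subscheme of $\AA^n$ cut out by $r$ polynomials of degree at most $d$, one has $\sum_i h^i_c(V_{\FFbar_q}, \QQ_\ell) \leq 6(3+rd)^{n+1} 2^r$. Inserting this into the inequality above gives precisely the bounds claimed. The main obstacle is really this last step: the trace formula plus Weil~II is routine, but the uniform Betti number bound is a substantial input, requiring control of both weights and ramification through a Bombieri-style deformation/fibration argument. Once that bound is taken on faith, the theorem follows immediately.
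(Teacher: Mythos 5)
Your proposal is correct and takes essentially the same route as the paper: Grothendieck--Lefschetz trace formula, Deligne's Weil~II bound $|{\cdot}|\le q^{i/2}$ on $H^i_c$ for $i<2\dim V$, Katz's uniform bound $6(3+rd)^{n+1}2^r$ on the sum of compactly-supported Betti numbers, and excision plus Poincar\'e duality to identify $H^{2\dim V}_c(V_{\FFbar_q},\QQ_\ell)$ with one copy of $\QQ_\ell(-\dim V)$ for each top-dimensional component. The only (harmless) difference is that you evaluate the top trace exactly as $m_0q^{\dim V}$, with $m_0\le m$ the number of Frobenius-stable components, handling both cases at once, whereas the paper first treats the case of rational components via Poincar\'e duality and in the general case merely bounds $|\Tr(F^*\mid H^{2\dim V}_c)|\le mq^{\dim V}$ using Deligne's weight estimate.
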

\begin{proof}
Set $N=\dim V$ and fix a prime $\ell$ that does not divide $q$.  By the Grothendieck-Lefschetz theorem \cite{MR0463174}*{II Th\'eor\`eme~3.2}, we have
\[
|V(\FF_q)| = \sum_{i=0}^{2N} (-1)^i \Tr(F^*| H^i_c(V_{\FFbar_q},\QQ_\ell)).
\]
where $F^*$ is the linear transformation arising from the Frobenius morphism which acts on the $\ell$-adic cohomology groups with compact support.  By Deligne \cite{MR601520}, the eigenvalues of $F^*$ acting on $H^i_c(V_{\FFbar_q},\QQ_\ell)$ have absolute value at most $q^{i/2}$ (under any inclusion $\Qbar_\ell\subseteq \CC$).   Therefore, 
\begin{align}  \label{E:Weil argument}
\Big|\, |V(\FF_q)| - \Tr(F^*| H^{2N}_c(V_{\FFbar_q},\QQ_\ell))\, \Big|& \leq \sum_{i=0}^{2N-1} q^{i/2} \dim H^i_c(V_{\FFbar_q},\QQ_\ell)\\
\notag &\leq q^{N-1/2} \sum_{i=0}^{2N-1} \dim H^i_c(V_{\FFbar_q},\QQ_\ell)) \\
\notag & \leq  q^{N-1/2}\cdot6  (3+rd)^{n+1}2^r
\end{align}
where the last inequality follows by the Corollary of Theorem~1 of \cite{MR1803934}.

First suppose that the components $V_1,\ldots, V_m$ are all defined over $\FF_q$.  Choose a closed subvariety $Z$ with $\dim Z< \dim V=N$ such that $U:=V-Z$ is the disjoint union of smooth, open and absolutely irreducible subvarieties $U_1,\ldots, U_m$ of $V$.  We have an excision exact sequence 
\[
H^{2N-1}_c(Z_{\FFbar_q},\QQ_\ell) \to H^{2N}_c(U_{\FFbar_q},\QQ_\ell) \to H^{2N}_c(V_{\FFbar_q},\QQ_\ell) \to H^{2N}_c(Z_{\FFbar_q},\QQ_\ell).
\]
The strict inequality $\dim Z < N$ implies that $H^{i}_c(Z_{\FFbar_q},\QQ_\ell)=0$ for all $i > 2(N-1)$, so we have a natural isomorphism $H^{2N}_c(U_{\FFbar_q},\QQ_\ell) \xrightarrow{\sim} H^{2N}_c(V_{\FFbar_q},\QQ_\ell)$ with compatible linear maps $F^*$.  We have $H^{2N}_c(U_{\FFbar_q},\QQ_\ell)=\oplus_{i=1}^m H^{2N}_c(U_{i,\FFbar_q},\QQ_\ell)$.   Using that $U_i$ is smooth and absolutely irreducible, Poincar\'e duality gives an isomorphism $H^{2N}_c(U_{i,\FFbar_q},\QQ_\ell) =\QQ_\ell(-N)$ for all $i$.  Therefore, $H^{2N}_c(V_{\FFbar_q},\QQ_\ell)\cong \QQ_\ell(-N)^m$ and hence $F^*$ acts as multiplication by $q^N$ on this vector space.   By (\ref{E:Weil argument}), we now deduce that 
\begin{align*}
\Big|\, |V(\FF_q)| - m q^N \Big| & \leq  6  (3+rd)^{n+1}2^r q^{N-1/2}.
\end{align*}

Now suppose we are in the general case.   We have just shown that $\dim_{\QQ_\ell} H^{2N}_c(V_{\FFbar_q},\QQ_\ell) = m$ (one can first base extend by a finite extension of $\FF_q$ over which all of the $V_i$ are defined).   The eigenvalues of $F^*$ acting on $H^{2N}_c(V_{\FFbar_q},\QQ_\ell)$ have absolute value at most $q^{N}$ by Deligne, so $|\Tr(F^*| H^{2N}_c(V_{\FFbar_q},\QQ_\ell))|\leq mq^N$ and the theorem follows.
\end{proof}

\begin{remark}
For the main application in this paper, it would suffice to have a version of Theorem~\ref{T:counting} where the term $6 (3+rd)^{n+1}2^r$ is replaced by any constant depending only on $r$, $d$ and $n$.   Such bounds can be readily deduced from the Weil-Lang bounds instead of the more sophisticated cohomological machinery.   The above stronger version will be required in future work.
\end{remark}

\section{Proof of Proposition~\ref{P:key}}  \label{S:Key proof}

Fix an abelian variety $A$ of dimension $g\geq 1$ defined over a number field $K$.    For each rational prime $\ell$, let $\rho_{A,\ell}\colon \Gal_K \to \Aut(A(\Kbar)[\ell])\cong \GL_{2g}(\FF_\ell)$ be the Galois representation coming from the Galois action on the $\ell$-torsion points of $A$.     For each $\ell$, let $\rho_{A,\ell^\infty}\colon \Gal_K\to \Aut(A(\Kbar)[\ell^\infty])\cong \GL_{2g}(\ZZ_\ell)$ be the $\ell$-adic representation which describes the Galois action on $A(\Kbar)[\ell^\infty]$.  

For a finite extension $K'$ of $K$ and a maximal ideal $\p$ of $\OO_{K'}$ such that $A_{K'}$ has good reduction,  let $P_{A,\p}(x)\in \ZZ[x]$ be the characteristic polynomial of Frobenius for the reduction of $A$ modulo $\p$; it is the unique polynomial in $\ZZ[x]$ such that $P_{A,\p}(x)= \det(xI-\rho_{A,\ell^\infty}(\Frob_\p))$ for all primes $\ell$ satisfying $\p\nmid \ell$.

\subsection{\texorpdfstring{Image of Galois modulo $\ell$}{Image of Galois modulo l}} \label{SS:Galois image}

\begin{thm}[Serre]  \label{T:Serre}
There is a finite Galois extension $L$ of $K$ and positive integers $N$, $r$ and $\kappa$ such that the following hold:
\begin{alphenum}
\item \label{I:Serre a}
For all $\ell\geq \kappa$, there is a connected, reductive subgroup $H_\ell$ of $\GL_{2g,\FF_\ell}$ of rank $r$ such that $\rho_{A,\ell}(\Gal_L)$ is contained in $H_\ell(\FF_\ell)$ and the index $[H_\ell(\FF_\ell): \rho_{A,\ell}(\Gal_L)]$ divides $N$.  Furthermore, $H_\ell$ contains the group $\GG_m$ of homotheties.
\item \label{I:Serre b}
The homomorphism $\prod_{\ell} \rho_{A,\ell} \colon \Gal_L \to \prod_{\ell} \rho_{A,\ell}(\Gal_L)$ is surjective.
\end{alphenum}
\end{thm}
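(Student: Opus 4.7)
The strategy is to assemble results of Serre (with inputs from Faltings and Bogomolov) on the $\ell$-adic image of Galois attached to $A$. For each prime $\ell$, let $G_\ell \subseteq \GL_{2g,\QQ_\ell}$ denote the Zariski closure of $\rho_{A,\ell^\infty}(\Gal_K)$. By Bogomolov, $G_\ell$ is reductive; by a theorem of Serre, the rank $r$ of $G_\ell^\circ$ is independent of $\ell$, and the finite component group $G_\ell/G_\ell^\circ$ corresponds via Galois theory to a single finite Galois extension $L/K$ that can be chosen independent of $\ell$. After replacing $K$ by $L$, I may assume $G_\ell$ is connected for all $\ell$ above some threshold, and that $\rho_{A,\ell^\infty}(\Gal_L)$ is open in $G_\ell(\QQ_\ell)$ with index uniformly bounded by some $N$.

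For part (a), I would pick $\kappa$ large enough that, for every $\ell \geq \kappa$, the connected reductive $\QQ_\ell$-group $G_\ell$ extends to a smooth reductive group scheme $\calG_\ell$ over $\ZZ_\ell$ with connected reductive special fiber $H_\ell$ over $\FF_\ell$ of the same rank $r$; such integral models exist by SGA 3 once $\ell$ avoids the finitely many primes dividing the invariants of the root datum of $G_\ell^\circ$. After a suitable $\GL_{2g}(\ZZ_\ell)$-conjugation, $\rho_{A,\ell^\infty}(\Gal_L)$ sits inside $\calG_\ell(\ZZ_\ell)$, and reduction modulo $\ell$ places $\rho_{A,\ell}(\Gal_L)$ inside $H_\ell(\FF_\ell)$. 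Since the reduction map $\calG_\ell(\ZZ_\ell)\to H_\ell(\FF_\ell)$ is surjective and the index of $\rho_{A,\ell^\infty}(\Gal_L)$ in $\calG_\ell(\ZZ_\ell)$ divides a fixed $N$ independent of $\ell$, the same $N$ bounds $[H_\ell(\FF_\ell) : \rho_{A,\ell}(\Gal_L)]$. The containment $\GG_m \subseteq H_\ell$ is the mod-$\ell$ shadow of Bogomolov's theorem that the scalar subgroup of $\GL_{2g,\QQ_\ell}$ lies in $G_\ell$, which itself follows from the Weil pairing combined with the existence of a polarization on $A$.

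For part (b), the joint surjectivity of $\prod_\ell \rho_{A,\ell}|_{\Gal_L}$ is Serre's mod-$\ell$ independence theorem from his 1985--86 Coll\`ege de France lectures. The underlying mechanism is Goursat's lemma: for $\ell$ large, $\rho_{A,\ell}(\Gal_L)$ is essentially a perfect group of Lie type whose order grows with $\ell$, so distinct primes share no common non-abelian composition factor; the abelian quotients are governed by the determinant, a power of the cyclotomic character whose images at distinct $\ell$ are automatically independent. A finite enlargement of $L$ absorbs the finitely many primes below $\kappa$ where these dichotomies can fail.

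The main obstacle is the integral-model step in (a): producing a smooth reductive model $\calG_\ell$ together with a uniform index bound $N$ valid for all $\ell \geq \kappa$ requires combining Bogomolov's reductivity of $G_\ell$, Serre's openness theorem inside $G_\ell(\QQ_\ell)$, and a Nori-type structural analysis of the mod-$\ell$ image, all with enough prime-uniformity to yield a single $N$. Extracting these prime-uniform statements from Serre's later $\ell$-adic work (and verifying that the same $L$ suffices for all $\ell$) is the delicate part of the argument.
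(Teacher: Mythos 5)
The paper does not actually prove Theorem~\ref{T:Serre}: it is stated as a result of Serre, with the proofs located in the 1985--86 Coll\`ege de France course, the letters to Vign\'eras (\#137, for part~(\ref{I:Serre a})) and to Ribet (\#138, for part~(\ref{I:Serre b})), and Wintenberger's paper for the detailed construction of the groups $H_\ell$. Your proposal instead tries to rederive the statement, and the derivation has a genuine gap at its central point. In part~(\ref{I:Serre a}) you pass from openness of $\rho_{A,\ell^\infty}(\Gal_L)$ in $G_\ell(\QQ_\ell)$ to the claim that its index in $\calG_\ell(\ZZ_\ell)$ divides one fixed $N$ for all large $\ell$. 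Openness gives a finite index for each fixed $\ell$ but says nothing uniform in $\ell$; no such prime-uniform $\ell$-adic index bound is available as a citation, and you yourself flag this as ``the delicate part.'' But that uniformity \emph{is} the content of the theorem, so deferring it leaves nothing proved. Serre's actual route is the reverse of yours: he works directly with the mod-$\ell$ image (Nori's theory for the subgroup generated by the order-$\ell$ elements, his theory of abelian representations and Frobenius tori for the toric part, a separate argument for the homotheties), and the comparison of the resulting $H_\ell$ with the special fiber of a smooth $\ZZ_\ell$-model of $G_\ell$ is a theorem proved afterwards (this is precisely what Wintenberger's paper establishes), not an input one can assume. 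Likewise, the existence of reductive $\ZZ_\ell$-models of $G_\ell$ for all large $\ell$ is not a direct application of SGA~3: SGA~3 produces models of a given root datum, but showing that the Galois-theoretic group $G_\ell$ has good reduction at $\ell$ for $\ell$ large is part of what must be proved.

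There is also a problem with your mechanism for part~(\ref{I:Serre b}). You argue via Goursat's lemma on the grounds that $\rho_{A,\ell}(\Gal_L)$ is ``essentially a perfect group of Lie type'' with abelianization governed by the cyclotomic character. That is true only in the big-monodromy situation; whenever the connected monodromy group has a torus quotient larger than the similitude factor (for instance, abelian varieties with complex multiplication, where the image is abelian), the abelian parts at different $\ell$ are values of algebraic Hecke characters, and their independence requires Serre's analysis of the abelian quotients (and possibly a further finite extension of $L$), not merely the cyclotomic character. So, as written, your argument for (\ref{I:Serre b}) does not cover all abelian varieties. In short, your outline points at the right circle of ideas, but the prime-uniform index bound and the treatment of the abelian/toric part are exactly the hard statements the theorem asserts; the paper handles them by citing Serre (letters \#137 and \#138) and Wintenberger, whereas your proposal leaves them unproved.
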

The above theorem is a consequence of results of J.-P.~Serre presented in his 1985-1986 course at the Coll\`ege de France, see \cite{MR965792}.   Detailed sketches were supplied in letters that have since been published in his collected papers; see the beginning of \cite{MR1730973}, in particular the letters to M.-F.~Vign\'eras \cite{MR1730973}*{\#137} and K.~Ribet \cite{MR1730973}*{\#138} contain information on parts (\ref{I:Serre a}) and (\ref{I:Serre b}), respectively.   The paper \cite{MR1944805} contains a detailed construction of the reductive groups $H_\ell$ (where they are denoted by $G(\ell)^{\operatorname{alg}}$).   For the rest of \S\ref{S:Key proof}, we will use the notation of Theorem~\ref{T:Serre}.

\begin{lemma} \label{L:split torus}   There is a finite Galois extension $M$ of $\QQ$ such that if $\ell$ is a sufficiently large prime that splits completely in $M$, then the following hold:
\begin{alphenum}
\item \label{I:split torus i}
The reductive group $H_\ell$ is split.
\item \label{I:split torus ii}  Let $x_{i,j}$ ($1\leq i,j \leq 2g$) and $y$ be independent variables.  We may identify $\GL_{2g,\FF_\ell}$ with the closed subvariety of $\Spec( \FF_\ell[ x_{i,j}, y]) = \AA^{n}_{\FF_\ell}$, with $n=4g^2+1$, defined by the equation $\det(x_{i,j})\cdot y=1$ (that is, identify a matrix $B$ with the $n$-tuple $((B_{i,j}), 1/\det(B))$).

Let $T$ be a split maximal torus of $H_\ell$.   Then the torus $T$, viewed as a closed subvariety of $\AA^n_{\FF_\ell}$, is defined by at most $C_1$ polynomials of degree at most $C_2$, where $C_1$ and $C_2$ are constants that do not depend on $\ell$.
\end{alphenum}
\end{lemma}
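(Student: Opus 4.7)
The plan is to realize the family of groups $\{H_\ell\}_{\ell \text{ large}}$ uniformly as specializations of a single reductive group scheme, and then to split this scheme after a finite \'etale base change which will determine $M$. First I would inspect the construction of the groups $H_\ell$ (denoted $G(\ell)^{\operatorname{alg}}$ in \cite{MR1944805}) to extract a positive integer $N'$ and a reductive closed subgroup scheme $\calH$ of $\GL_{2g,\ZZ[1/N']}$ satisfying $\calH \times_{\ZZ[1/N']} \FF_\ell = H_\ell$ for all primes $\ell \nmid N'$. This uniformity in $\ell$---that the $H_\ell$ are not merely an isomorphism class of reductive groups but the fibers of a single closed subgroup scheme of $\GL_{2g}$---is the key input from the references.

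Granting such a $\calH$, I would appeal to the general theory of reductive group schemes: every reductive group scheme over a connected base becomes split after a suitable finite \'etale cover. Applied to $\calH$, this yields a finite Galois extension $M/\QQ$, an integer $N''$ divisible by $N'$, and a split maximal torus subgroup scheme $\calT \cong \GG_{m,\OO_M[1/N'']}^r$ of $\calH_{\OO_M[1/N'']}$, where $\Spec\OO_M[1/N''] \to \Spec\ZZ[1/N']$ is finite \'etale. This $M$ will be the field in the statement. For any prime $\ell$ splitting completely in $M$ and coprime to $N''$, and any prime $\lambda$ of $\OO_M$ above $\ell$, the residue field at $\lambda$ is $\FF_\ell$, so reducing $\calT \subseteq \calH_{\OO_M[1/N'']}$ modulo $\lambda$ gives a split maximal torus $T_\lambda$ of $H_\ell$, proving (i).

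For (ii), the closed immersion $\calT \hookrightarrow \GL_{2g,\OO_M[1/N'']} \subseteq \AA^n_{\OO_M[1/N'']}$ is cut out by an ideal generated by finitely many polynomials in the $x_{i,j}$ and $y$ of some bounded degree; both the count $C_1$ and the degree bound $C_2$ depend only on the fixed data $(\calT, \calH, M)$ and so are independent of $\ell$. Reducing these generators modulo $\lambda$ gives defining equations for $T_\lambda$ in $\AA^n_{\FF_\ell}$ with the same bounds. To transfer this to an \emph{arbitrary} split maximal torus $T$ of $H_\ell$, I would use that any two split maximal tori of a split reductive group over a field are rationally conjugate, so $T = g T_\lambda g^{-1}$ for some $g \in H_\ell(\FF_\ell) \subseteq \GL_{2g}(\FF_\ell)$. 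Conjugation by $g$ acts as a linear change of coordinates on the $x_{i,j}$ and fixes $y = 1/\det(x)$, preserving both the count and the degrees of defining polynomials, so $T$ inherits the same $C_1, C_2$.

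The main obstacle is the first step: verifying from the construction in \cite{MR1944805} that the $H_\ell$ arise, uniformly in $\ell$, as reductions of a single closed subgroup scheme of $\GL_{2g,\ZZ[1/N']}$. This uniformity is what makes both the simultaneous splitting and the degree bound go through. The remaining ingredients---isotriviality of reductive group schemes, rational conjugacy of split maximal tori in split reductive groups, and finite presentation---are standard.
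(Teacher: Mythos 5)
The decisive step of your plan---that the groups $H_\ell$ arise, for all large $\ell$, as the fibers $\calH \times_{\ZZ[1/N']} \FF_\ell$ of a \emph{single} reductive closed subgroup scheme $\calH \subseteq \GL_{2g,\ZZ[1/N']}$---is not something that can be extracted from \cite{MR1944805} or from Serre's letters, and it is in fact a genuine gap: such a global model would amount to a strong $\ell$-independence statement for the algebraic envelopes of the mod-$\ell$ images (in the spirit of the Mumford--Tate conjecture), which is open in general. What the references actually provide is much weaker and purely $\ell$-by-$\ell$: Wintenberger constructs each $H_\ell$ (his $G(\ell)^{\operatorname{alg}}$) separately, with a lift to a reductive group scheme $\calH_\ell$ over $\ZZ_\ell$ only; Serre proves $\ell$-independence of coarser invariants, namely the rank, the image $\cl(H_\ell)=\calP_{\FF_\ell}$ under the characteristic-polynomial map, and the fact that the complexifications $H_{\ell,\CC}$ fall into finitely many $\GL_{2g}(\CC)$-conjugacy classes independent of $\ell$. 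Finitely many $\CC$-conjugacy classes, with conjugating elements varying with $\ell$, is far from a single subgroup scheme over $\ZZ[1/N']$, so your simultaneous splitting over $\OO_M[1/N'']$ and the subsequent reduction argument for part (i) have nothing to stand on. You flagged this step as the main obstacle, but as stated it is not a verification problem---it is the missing (and unavailable) ingredient.

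The rest of your outline is sound (isotriviality of reductive group schemes over a normal connected base, conjugacy of split maximal tori under $H_\ell(\FF_\ell)$, and the observation that conjugation is a linear change of the coordinates $x_{i,j}$ fixing $y$), and indeed the conjugation trick is also how the paper reduces part (ii) to a single torus. But the paper gets the splitting in (i) without any global model: it chooses a prime $\frakq$ of (an extension of) $L$ at which the Frobenius characteristic polynomial $P_{A,\frakq}(x)$ has the maximal number $d$ of distinct roots, takes $M$ to be the splitting field of $P_{A,\frakq}(x)$ over $\QQ$, and shows that for $\ell$ split in $M$ the semisimple part of $\rho_{A,\ell}(\Frob_{\frakq})$ separates the weights of any maximal torus containing it; since those weights take distinct $\FF_\ell$-rational values and generate the character group, the Galois action on $X(T)$ is trivial and $T$ is split. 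For (ii) it diagonalizes $T$, so the only nonlinear equations are monomial relations $\prod_i B_{i,i}^{n_i}=1$ coming from the relation lattice of the weights, and it bounds the degrees of a generating set of that lattice using precisely the finitely-many-$\CC$-conjugacy-classes statement---the uniform input that is actually available, as opposed to the global group scheme your argument requires.
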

\begin{proof}
Define the scheme $\AA^{2g}_*=\AA^{2g-1}\times \GG_m$, and let $\cl \colon \GL_{2g} \to \AA^{2g}_*$ be the morphism that associates to a matrix $B$ the $2g$-tuple $(a_1,\ldots,a_{2g})$ where $\det(xI-B) = x^{2g}+a_1x^{2g-1} + \ldots + a_{2g-1} x + a_{2g}$.     If $G$ is a connected reductive subgroup of $\GL_{2g,K}$ for a field $K$, then $\cl(G)$ is a closed irreducible subvariety of $\AA_{*,K}^{2g}$ whose dimension is the same as the rank of $G$ (it suffices to consider only a maximal torus of $G$). 

There is a finite extension $L'$ of $L$ such that the Zariski closure of $\rho_{A,\ell^\infty}(\Gal_{L'})$ in $\GL_{2g,\QQ_\ell}$ is a \emph{connected} algebraic group for each $\ell$, cf.~\cite{MR1730973}*{p.18} and \cite{MR1441234}.
Let $\calP$ be the Zariski closure in $\AA^{2g}_{*,\QQ}$ of the set of tuples $P_{\p}:=(a_1,\ldots, a_{2g})$ where $P_{A,\p}(x)=x^{2g}+a_1x^{2g-1} + \ldots + a_{2g-1} x + a_{2g}$ and $\p$ varies over the maximal ideals of $\OO_{L'}$ for which $A$ has good reduction.  Serre has shown that, after choosing an integral model of $\calP$, we have $\cl(H_\ell)=\calP_{\FF_\ell}$ for all sufficiently large $\ell$, see \cite{MR1730973}*{\#137 \S6}.   In particular, the rank of $H_\ell$ agrees with $\dim (\calP)$ for $\ell$ large enough (this is how $r$ is determined in the proof of Thereom~\ref{T:Serre}).

Let $d$ be the maximum number of distinct roots $P_{A,\p}(x)$ has in $\Qbar$ as $\p$ varies over the maximal ideal of $\OO_{L'}$ for which $A$ has good reduction.   For $\ell$ large enough so that $H_\ell$ is defined, we define $d_\ell$ to the maximum number of distinct roots $\det(xI-h) \in \FFbar_\ell[x]$ has as $h$ varies over the elements of $H_\ell(\FFbar_\ell)$.   For $\ell$ large, the equality $\cl(H_\ell)=\calP_{\FF_\ell}$ implies that $d=d_\ell$ (the polynomials with less than $d$ roots are described by a codimension $1$ subvariety of $\calP$).      The set of maximal ideals $\p\subseteq \OO_{L'}$ for which $P_{A,\p}(x)$ has $d$ distinct roots has density 1.  Let $\frakq$ be a maximal ideal of $\OO_{L'}$ for which $A$ has good reduction and $P_{A,\frakq}(x)$ has $d$ distinct roots.    There is a constant $c_1$ such that $P_{A,\frakq}(x)\equiv \det(xI - \rho_{A,\ell}(\Frob_{\frakq})) \in \FF_\ell[x]$ has $d=d_\ell$ distinct roots for all $\ell\geq c_1$.    Let $M$ be the splitting field of $P_{A,\frakq}(x)$ over $\QQ$.    For the rest of the proof, suppose that $\ell$ is a prime greater than $c_1$ for which $\ell$ splits completely in $M$, and hence $\det(xI - \rho_{A,\ell}(\Frob_{\frakq}))\in \FF_\ell[x]$ has $d$ distinct roots in $\FF_\ell$.  

Let $t_{\frakq}\in H_\ell(\FF_\ell)$ be the semisimple part of a representative of the conjugacy class $\rho_{A,\ell}(\Frob_{\frakq})$.  Let $T$ be a maximal torus of $H_\ell$ which contains $t_{\frakq}$; we will show that $T$ is split.  Let $X(T)$ be the group of characters $T_{\FFbar_\ell} \to \GG_{m,\FFbar_\ell}$ and let $\iota\colon T \to \GL_{2g,\FF_\ell}$ the inclusion morphism.  For each character $\alpha \in X(T)$, define the vector space
\[
V(\alpha) = \{ v \in \FFbar_\ell^{2g} :  \iota(t)\cdot v =  \alpha(t) v \text{ for all $t\in T(\FFbar_\ell)$}\}.
\]
We say that $\alpha$ is a \emph{weight} of $\iota$ if $V(\alpha)\neq 0$, and we will denote the (finite) set of such weights by $\Omega$.  We have $\FFbar_\ell^{2g} = \oplus_{\alpha \in \Omega} V(\alpha)$ and for each $t\in T(\FFbar_\ell)$, 
\[
\det(xI -\iota(t))=\prod_{\alpha\in \Omega}(x-\alpha(t))^{\dim_{\FFbar_\ell} V(\alpha)}.
\] 
Since every semisimple element of $H_\ell$ is conjugate to an element in $T$, we find that $|\Omega|=d_\ell$ and hence $|\Omega|=d$.   Since $P_{A,\frakq}(x)\equiv \det(xI - t_{\frakq})\in \FF_\ell[x]$ has $d$ distinct roots in $\FF_\ell$, we deduce that $\alpha(t_{\frakq})$ belongs to $\FF_\ell$ for each $\alpha\in \Omega$ and that $\alpha_1(t_{\frakq})\neq \alpha_2(t_{\frakq})$ for all distinct $\alpha_1,\alpha_2\in \Omega$.  

For $\sigma\in \Gal_{\FF_\ell}$ and $\alpha\in X(T)$, we define ${}^\sigma\! \alpha$ to be the character of $T$ for which $\sigma(\alpha(t))= {}^\sigma\! \alpha(\sigma(t))$ for all $t\in T(\FFbar_\ell)$; this defines an action of the absolute Galois group $\Gal_{\FF_\ell}=\Gal(\FFbar_\ell/\FF_\ell)$ on the character group $X(T)$.    Since $\iota$ is defined over $\FF_\ell$, $\Gal_{\FF_\ell}$ also acts on the set $\Omega$.  Take any $\alpha\in \Omega$ and $\sigma \in \Gal_{\FF_\ell}$.  Since $\alpha(t_{\frakq})$ and $t_{\frakq}$ are defined over $\FF_\ell$, we have $\alpha(t_{\frakq})=\sigma( \alpha(t_{\frakq})) = {}^\sigma\! \alpha(\sigma(t_{\frakq}))= {}^\sigma\! \alpha(t_{\frakq})$.  Since $\beta(t_{\frakq})$ takes distinct values for different $\beta \in \Omega$, we deduce that ${}^\sigma\! \alpha =\alpha$.    Therefore, the action of $\Gal_{\FF_\ell}$ on $\Omega$ is trivial.  The group $X(T)$ is generated by $\Omega$, since $\iota$ is a faithful embedding, so the $\Gal_{\FF_\ell}$ action on $X(T)$ is also trivial.   That $\Gal_{\FF_\ell}$ acts trivially on $X(T)$ implies that $T$ is a split torus \cite{MR1102012}*{III \S8}.   This completes the proof of part (\ref{I:split torus i}).
 
We will now prove part (\ref{I:split torus ii}).   Since all split maximal tori of $H_\ell$ are conjugate by an element of $H_\ell(\FF_\ell)$, and conjugation does change the number or degree of the equations needed to define the torus, we need only verify (\ref{I:split torus ii}) for our specific split torus $T$.   Similarly by conjugating $H_\ell$ by an element of $\GL_{2g}(\FF_\ell)$, we may assume that the split torus $T$ lies in the diagonal of $\GL_{2g,\FF_\ell}$.   Moreover, we may assume that the inclusion $T\to \GL_{2g,\FF_\ell}$ maps $t\in T$ to the diagonal matrix
\[
\left(\begin{smallmatrix}\alpha_1(t) I_{m_1} &  &  &  \\ & \alpha_2(t) I_{m_2} &  &  \\ &  & \ddots &  \\ &  &  & \alpha_d(t) I_{m_d}\end{smallmatrix}\right)
\]
where $\Omega=\{\alpha_1,\ldots, \alpha_d\}$ and $m_i = \dim_{\FFbar_\ell} V(\alpha_i)$.    Define $e_s = 1+ \sum_{k<s} m_k$.  The torus $T$ thus consists of the matrices $B\in \GL_{2g,\FF_\ell}$ for which $B_{i,j}=0$ for $i\neq j$,  $B_{i,i}=B_{j,j}$ if $e_{s-1} \leq i<j < e_s$ for $1\leq s\leq d$, and $\prod_{1\leq i \leq d} B_{i,i}^{n_{i}}=1$ whenever $\prod_{1\leq i \leq d} \alpha_i^{n_{i}}=1$ with $n_i\in\ZZ$.   It thus suffices to prove that subgroup $\mathcal{N}$ of $\ZZ^d$ consisting of those $(n_1,\ldots,n_d)$ for which $\prod_{1\leq i \leq d} \alpha_i^{n_i}=1$ is generated by the finite set $\{ (n_1,\ldots, n_d) : |n_i| \leq C \}$ where $C$ is some constant that does not depend on $\ell$.   One of the ingredients in Serre's proof of $\cl(H_\ell)=\calP_{\FF_\ell}$ for large $\ell$ is that we can lift $H_\ell$ to a reductive group $\calH_\ell\subseteq \GL_{2g,\ZZ_\ell}$ over $\ZZ_\ell$.  Moreover, our lifts can be chosen such that for any embedding $\QQ_\ell \hookrightarrow \CC$, the reductive group $H_{\ell,\CC} \subseteq \GL_{2g,\CC}$ is conjugate in $\GL_{2g,\CC}$ to a \emph{finite number} of reductive groups (which do not depend on $\ell$).   This finiteness allows us to pick a constant $C$ that depends only on these finitely many reductive groups, and is hence independent of $\ell$.
\end{proof}

\subsection{Proof of Proposition~\ref{P:key}}
With notation as in \S\ref{SS:Galois image}, fix a conjugacy class $C$ of $\Gal(L/K)$.  We define $d_C$ to be the maximum number of distinct roots $P_{A,\p}(x^N)$ has in $\Qbar$ as $\p$ varies over all primes of $\OO_K$ for which $A$ has good reduction, is unramified in $L$, and satisfies $(\p,L/K)=C$; fix such a prime $\p_C$ for which this maximum occurs.\\

Let $\calS$ be the set of primes $\ell$ that satisfy the following conditions:
\begin{itemize}
\item $\ell \geq \kappa$ and $\p_C\nmid \ell$ for each conjugacy class $C$ of $\Gal(L/K)$,
\item $\ell$ splits completely in $M$, 
\item For each conjugacy class $C$ of $\Gal(L/K)$, $P_{A,\p_C}(x^N) \bmod{\ell} \in \FF_\ell[x]$ has $d_C$ distinct roots in $\FF_\ell$.
\end{itemize}
The set $\calS$, after possibly removing a finite number of primes, will be the set of Proposition~\ref{P:key}.  The set $\calS$ has positive density by the Chebotarev density theorem.   After removing a finite number of primes from $\calS$, by Lemma~\ref{L:split torus}(\ref{I:split torus i}) we may assume that $H_\ell$ is split for all $\ell \in\calS$.\\

For the rest of this section, fix a prime $\ell\in \calS$ and an element $\beta \in \Gal_K$.   Let $C$ be the conjugacy class of $\Gal(L/K)=\Gal_K/\Gal_L$ containing $\beta\Gal_L$.   Choose a matrix $B\in \rho_{A,\ell}(\beta\Gal_L)$ that lies in the conjugacy class $\rho_{A,\ell}(\Frob_{\p_C})$.  Since the index of $\rho_{A,\ell}(\Gal_L)$ in $H_\ell(\FF_\ell)$ divides $N$, we have $h^N \in \rho_{A,\ell}(\Gal_L)$ for all $h\in H_\ell(\FF_\ell)$.   In particular, $Bh^N\in \rho_{A,\ell}(\beta\Gal_L)$ for every $h\in H_\ell(\FF_\ell)$.  Therefore, 
\begin{align} \label{E:union over T}
\bigcup_{T \text{ split maximal torus of $H_\ell$}} \{ B t^N : t \in T(\FF_\ell) \text{ such that $\det(I- Bt^N)=0$ and $t^N$ is regular in $H_\ell$}\}
\end{align}
is a subset of $\{ h\in \rho_{A,\ell}(\beta\Gal_L): \det(I-h)=0\}$.  Suppose that $t_1$ and $t_2$ are semisimple elements of $H_\ell(\FF_\ell)$ with $t_1^N$ and $t_2^N$ regular in $H_\ell$.  If $Bt_1^N=Bt_2^N$, then $t_1^N=t_2^N$, and since they are regular they must lie in a unique maximal torus of $H_\ell$; in particular, $t_1$ and $t_2$ lie in the same (unique) maximal torus of $H_\ell$.   Therefore, (\ref{E:union over T}) is actually a disjoint union.   

If $h$ is an element of the rank $r$ torus $T$, then there are at most $N^r$ element $t$ in $T$ for which $t^N=h$.   We thus have
\begin{align} \label{E:sum over T}
&|\{ h\in \rho_{A,\ell}(\beta\Gal_L): \det(I-h)=0\}| \\
\notag \geq& \frac{1}{N^r} \sum_{T} |\{  t \in T(\FF_\ell) :  \det(I- Bt^N)=0 \text{ and $t^N$ is regular in $H_\ell$}\}|
\end{align}
where the sum is over all split maximal tori $T$ of $H_\ell$.  The key technical lemma of this paper is the following:

\begin{lemma} \label{L:technical}
There is a constant $c$ not depending on the choice of $B$ or $\ell$ such that 
\begin{equation*}
|\{  t \in T(\FF_\ell) :  \det(I- Bt^N)=0 \text{ and $t^N$ is regular in $H_\ell$}\}| \geq \ell^{r-1} - c \ell^{r-3/2}
\end{equation*}
for all split maximal tori $T$ of $H_\ell$.  
\end{lemma}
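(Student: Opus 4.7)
The plan is to view the counted set (modulo the regularity condition) as the set of $\FF_\ell$-rational points of the closed subvariety
\[
V_B := \{t \in T : \det(I - Bt^N) = 0\}
\]
of $T$, and then to apply Theorem~\ref{T:counting}. Using Lemma~\ref{L:split torus}(\ref{I:split torus ii}), the torus $T$ sits in $\AA^n_{\FF_\ell}$ as a closed subvariety cut out by at most $C_1$ polynomials of degree at most $C_2$, both independent of $\ell$; since $\det(I - Bt^N)$ is a polynomial of degree at most $2gN$ in the matrix entries of $t$ and $N$ is fixed, $V_B$ is cut out by a bounded number of polynomials of bounded degree, uniformly in $\ell$, as Theorem~\ref{T:counting} requires.

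A preliminary check is that $\dim V_B = r-1$. The lower bound is Krull's Hauptidealsatz, and for the upper bound I would exploit that the scalar subgroup $\GG_m \hookrightarrow H_\ell$ (present by Theorem~\ref{T:Serre}(\ref{I:Serre a})) is central and therefore lies in every maximal torus, in particular in $T$; restricted to $\GG_m$ the defining equation becomes $\det(I - \lambda^N B)$, a polynomial in $\lambda$ with constant term $1$, which is therefore not identically zero. Hence $V_B \cap \GG_m$ is finite, $V_B \neq T$, and $\dim V_B = r-1$.

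The main obstacle is to exhibit a geometrically irreducible component $W \subseteq V_B$ of dimension $r-1$ that is defined over $\FF_\ell$; once $W$ is in hand, Theorem~\ref{T:counting} applied to $W$ yields $|W(\FF_\ell)| \geq \ell^{r-1} - c_1 \ell^{r-3/2}$ with $c_1$ uniform. To produce $W$ I would exploit the properties of $B$ and $\ell$ forced by $\ell \in \calS$: the matrix $B$ is regular semisimple with $2g$ distinct eigenvalues $\mu_1, \ldots, \mu_{2g}$ in $\FF_\ell$, and the condition that $P_{A,\p_C}(x^N) \bmod \ell$ has $d_C$ distinct $\FF_\ell$-roots forces each $\mu_i$ to be an $N$-th power in $\FF_\ell^*$ (and in particular $\ell \equiv 1 \pmod N$). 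Since all split maximal tori of $H_\ell$ are $H_\ell(\FF_\ell)$-conjugate, the counting problem transfers via $t \mapsto g^{-1} t g$ to a pair in which the defining polynomial can be expanded explicitly in the weight basis, and among its factors appear expressions essentially of the form $1 - \mu_i \alpha(t)^N$ for weights $\alpha$ of $T$, whose vanishing locus is a union of translates of a codimension-$1$ subtorus of $T$. Under our $\ell$-conditions every such translate is defined over $\FF_\ell$ and geometrically irreducible, and any one of them gives the desired $W$.

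To finish, I would subtract the non-regular locus $R = \bigcup_\alpha \{t : \alpha(t)^N = 1\}$, where $\alpha$ ranges over roots of $(H_\ell, T)$: this is a union of codimension-$1$ subvarieties of $T$, so $W \cap R$ has codimension $\geq 2$ in $T$ provided $W$ is not accidentally contained in a root hyperplane, which fails only for finitely many exceptional $\ell$. Theorem~\ref{T:counting} applied to each such codimension-$2$ piece contributes only $O(\ell^{r-2})$ points, and this is absorbed into the $O(\ell^{r-3/2})$ error, yielding the claimed bound $\ell^{r-1} - c\ell^{r-3/2}$ with $c$ uniform in $B$ and $\ell$.
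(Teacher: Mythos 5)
The central step of your plan---producing a geometrically irreducible component of $V_B$ defined over $\FF_\ell$ by an explicit factorization of $\det(I-Bt^N)$---does not work, and this is precisely where the real difficulty of the lemma lies. First, $B$ is a representative of the coset $\rho_{A,\ell}(\beta\Gal_L)$ with $\beta$ in general \emph{not} in $\Gal_L$, so $B$ need not lie in $H_\ell(\FF_\ell)$, let alone normalize or commute with $T$; after conjugating $T$ into diagonal form, $\det(I-Bt^N)$ is a polynomial in the weight monomials whose coefficients are minors of $B$, and it does not split into factors of the shape $1-\mu_i\,\alpha(t)^N$. Already for the diagonal torus of $\GL_2$ and a non-diagonal $B$ one has $\det\big(I-B\,\mathrm{diag}(x,y)\big)=1-B_{11}x-B_{22}y+\det(B)\,xy$, which is generically irreducible. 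Second, the spectral input you invoke is not available: membership of $\ell$ in $\calS$ only guarantees that $\det(x^NI-B)\equiv P_{A,\p_C}(x^N)\bmod \ell$ has $d_C$ distinct roots in $\FF_\ell$, where $d_C\le 2gN$ is a maximum over primes in the class $C$; it does not give $2g$ distinct eigenvalues of $B$, nor that all eigenvalues lie in $\FF_\ell$ or are $N$-th powers, nor $\ell\equiv 1\pmod N$ (indeed $P_{A,\p_C}$ may have repeated roots). Even granting such a component $W$, applying Theorem~\ref{T:counting} to $W$ alone requires $W$ to be cut out by boundedly many equations of bounded degree, which you do not address, and the claim that $W\subseteq R$ ``fails only for finitely many exceptional $\ell$'' is asserted without any uniformity argument---for each $\ell$ these are different varieties, so finiteness of exceptions needs a proof.

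The rationality of components is exactly the crux, and the paper handles it by a different mechanism: since $\GG_m\subseteq T$, one studies the finite morphism $\varphi\colon W\to T/\GG_m$, whose fiber over $\bbar t$ counts the distinct roots of $\det(x^NI-Bt^N)$; a point count on the ``fewer roots'' locus (via Lemma~\ref{L:split torus}(\ref{I:split torus ii}) and Theorem~\ref{T:counting}) combined with a Chebotarev argument shows its degree equals $d_C$, and the defining property of $\p_C$ makes the fiber over the identity coset consist of $d_C$ distinct $\FF_\ell$-points (Lemma~\ref{L:rational fiber}). Each of these points lies on a unique top-dimensional component, so Frobenius fixes \emph{every} component, all of which are therefore defined over $\FF_\ell$ (Lemma~\ref{L:component}), and Theorem~\ref{T:counting} can be applied to the whole of $W$, which is cut out by boundedly many bounded-degree equations. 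Note also that the non-regular locus in $T(\FF_\ell)$ has on the order of $\ell^{r-1}$ points---the same order as the main term---so discarding it is not automatically harmless; the paper reduces the non-regular contribution inside $W$ to $O(\ell^{r-2})$ by the $\GG_m$-scaling trick (the set $\{t: t^N\text{ not regular}\}$ is stable under scalars, and each scalar orbit meets $W(\FF_\ell)$ in at most $d$ points). Your codimension-two alternative could in principle replace that trick, but only after $W\not\subseteq R$ is established uniformly in $\ell$, and in any case it rests on the flawed construction of $W$.
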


Assuming the validity of Lemma~\ref{L:technical}, let us finish the proof of Proposition~\ref{P:key}.   Combining (\ref{E:sum over T}) with Lemma~\ref{L:technical}, we find that 
\[
|\{h\in \rho_{A,\ell}(\beta\Gal_L): \det(I-h)=0\}| \geq  \frac{1}{N^r} \sum_{T \text{ split maximal torus of }H_\ell}  \big(\ell^{r-1} - c \ell^{r-3/2}\big).
\]

Fix a split maximal torus $T$ of $H_\ell$ (such a torus exists by our choice of $\calS$).   All split maximal tori of $H_\ell$ are conjugate to $T$ by some element of $H_\ell(\FF_\ell)$.    Let $\mathcal{N}$ be the group of elements of $H_\ell(\FF_\ell)$ that normalize the torus $T$.   The group $\mathcal{N}$ clearly contains $T(\FF_\ell)$ and the quotient $\mathcal{N}/T(\FF_\ell)$ is isomorphic to the Weyl group $W(H_\ell)$.   Therefore, there are exactly $|H_\ell(\FF_\ell)|/|\mathcal{N}| = |H_\ell(\FF_\ell)| |W(H_\ell)|^{-1} (\ell-1)^{-r}$ split maximal tori of $H_\ell$.   Combining this with our previous estimate, we have
\begin{align*}
|\{h\in \rho_{A,\ell}(\beta\Gal_L): \det(I-h)=0\}| & \geq   N^{-r} |H_\ell(\FF_\ell)| |W(H_\ell)|^{-1} (\ell-1)^{-r} \cdot (\ell^{r-1} - c \ell^{r-3/2}\big)\\
&\geq N^{-r} |H_\ell(\FF_\ell)| |W(H_\ell)|^{-1} (1 - c \ell^{-1/2}\big)\cdot \ell^{-1}.
\end{align*}
Using that $|H_\ell(\FF_\ell)|\geq |\rho_{A,\ell}(\Gal_L)|=|\rho_{A,\ell}(\beta\Gal_L)|$, we find that 
\[
\frac{|\{h\in \rho_{A,\ell}(\beta\Gal_L): \det(I-h)=0\}|}{|\rho_{A,\ell}(\beta\Gal_L)|} \geq N^{-r}  |W(H_\ell)|^{-1} (1 - c \ell^{-1/2}\big)\cdot \ell^{-1}.
\]
Since $H_\ell$ is a reductive group of rank $r$, there is a lower bound for $|W(H_\ell)|^{-1}$ that depends only on $r$.    Proposition~\ref{P:key}(\ref{a:lower}) is now immediate after removing a finite number of primes from $\calS$.  Proposition~\ref{P:key}(\ref{b:independence}) is a consequence of Theorem~\ref{T:Serre}(\ref{I:Serre b}) and our choice of $L$.

\subsection{Proof of Lemma~\ref{L:technical}}
Fix a split maximal torus $T$ of $H_\ell$.  Let $W$ be the closed subvariety of $T$ defined by the equation $\det(I-Bt^N)=0$ where $t\in T$.   

By Theorem~\ref{T:Serre}(\ref{I:Serre a}), $T$ contains the group $\GG_m$ of homotheties.  Let $\varphi\colon W \to T/\GG_m$ be the morphism obtained by composing the inclusion $W \hookrightarrow T$ with the quotient homomorphism.    Take any $t\in T(\FFbar_\ell)$, and let $\bbar{t}$ be the corresponding coset in $T/\GG_m$.  Then $\varphi^{-1}(\bbar{t})=\{ \lambda t : \lambda \in \FFbar_\ell,\, \det(I-\lambda^{N}Bt^N)=0 \}$, and hence $|\varphi^{-1}(\bbar{t})|$ equals the number of distinct roots of $\det(x^N - Bt^N)$ in $\FFbar_\ell$.   In particular, $\varphi$ is a finite morphism and we shall denote its degree by $d$.

\begin{lemma} \label{L:rational fiber}
Assuming $\ell\in\calS$ is sufficiently large, there exists an element $t\in T(\FF_\ell)$ such that $\varphi^{-1}(\bbar{t})$ consists of $d$ distinct points each belonging to $W(\FF_\ell)$.   
\end{lemma}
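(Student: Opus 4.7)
The plan is to prove Lemma~\ref{L:rational fiber} by uniform point counting, applying Theorem~\ref{T:counting} to an auxiliary variety whose $\FF_\ell$-points parameterise the completely split fibers of $\varphi$.

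\textbf{Set-up.} Let $\pi\colon T\to V:=T/\GG_m$ be the quotient. Define
\[
Z := \big\{(t_1,\dots,t_d)\in T^d : t_i\in W,\ \pi(t_i)=\pi(t_1)\text{ for all }i,\ t_i\neq t_j\text{ for }i\neq j\big\},
\]
a locally closed $\FF_\ell$-subscheme of $T^d$. Any $\FF_\ell$-point of $Z$ supplies $d$ pairwise distinct $\FF_\ell$-rational points of the single fiber $\varphi^{-1}(\bar t_1)$; since $\deg\varphi=d$, these exhaust the fiber, and $t:=t_1$ satisfies the conclusion of the lemma.

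\textbf{Dimension and uniform complexity.} Because $\varphi$ is finite of degree $d$ with dense étale locus in $V$, a generic geometric fiber has $d$ distinct points, so the projection $Z\to V$ is generically $d!$-to-one and $\dim Z=\dim V=r-1$. By Lemma~\ref{L:split torus}(ii) applied to each factor of $T^d$, together with the polynomial conditions $t_i\in W$ (of degree bounded in $g$ and $N$) and $\pi(t_i)=\pi(t_1)$ (a bounded number of equations of bounded degree, since $\pi$ is quotient by the homothety subgroup lying inside the diagonal torus of $\GL_{2g}$), the scheme $Z$ sits in $\AA^{nd}_{\FF_\ell}$ cut out by a number of equations of degrees bounded by constants depending only on $g$, $N$, $C_1$, $C_2$ --- in particular, independent of $\ell$.

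\textbf{Applying the Weil bound.} Theorem~\ref{T:counting}, applied to the Zariski closure $\bar Z$ of $Z$ in $\AA^{nd}_{\FF_\ell}$ together with the trivial $O(\ell^{r-2})$ estimate on the lower-dimensional boundary $\bar Z\setminus Z$, yields
\[
|Z(\FF_\ell)|\ \geq\ m_0\,\ell^{r-1} - C\,\ell^{r-3/2},
\]
where $m_0$ is the number of top-dimensional $\FF_\ell$-defined geometrically irreducible components of $\bar Z$ and $C$ is independent of $\ell$. It therefore suffices to establish $m_0\geq 1$ for $\ell\in\calS$ sufficiently large, for then $Z(\FF_\ell)\neq\emptyset$.

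\textbf{Main obstacle: verifying $m_0\geq 1$ uniformly in $\ell$.} This is where the construction of $\calS$ is essential. By Theorem~\ref{T:Serre}(a), $H_\ell$ contains the central subgroup $\GG_m$ of homotheties, which therefore lies in $T$. Over the identity coset $\bar 1\in V(\FF_\ell)$, the fiber $\varphi^{-1}(\bar 1)$ consists of the scalars $\mu I\in T$ with $\mu^N$ equal to the inverse of an eigenvalue of $B$; by the conditions defining $\calS$ (namely $\ell$ splits completely in $M\supseteq\QQ(\zeta_N)$ and $P_{A,\p_C}(x^N)\bmod\ell$ splits completely in $\FF_\ell[x]$), all such $\mu$ lie in $\FF_\ell$, and there are precisely $d_C$ of them. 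A Chebotarev-density argument in $H_\ell(\FF_\ell)$ --- using the maximality in the definition of $d_C$ combined with Theorem~\ref{T:Serre}(a) --- shows that this $d_C$ coincides with the generic degree $d$ of $\varphi$, so the fiber over $\bar 1$ is already a completely split fiber, producing an $\FF_\ell$-point of the symmetric quotient $Z/S_d$ and hence (after choosing an ordering) of $Z$ itself. Such a point lies on a top-dimensional geometrically irreducible component of $\bar Z$ defined over $\FF_\ell$, forcing $m_0\geq 1$. Uniformity of this argument across $\ell\in\calS$ is provided by Serre's integral model $\calH_\ell/\ZZ_\ell$ and the finite list of characteristic-zero models for $(H_\ell,T)$ from the end of the proof of Lemma~\ref{L:split torus}(ii), which reduces the entire verification to a finite check independent of $\ell$.
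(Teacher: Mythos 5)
There is a genuine gap, and it sits exactly where the real content of the lemma lies. Your final paragraph asserts that the fiber of $\varphi$ over the identity coset $\bbar{I}$ is completely split with $d$ points, i.e.\ that $d_C=d$, by appeal to ``a Chebotarev-density argument in $H_\ell(\FF_\ell)$'' together with the maximality in the definition of $d_C$. No such argument is given, and this equality is precisely what the lemma amounts to: the conditions defining $\calS$ only guarantee that $\det(x^NI-B)\equiv P_{A,\p_C}(x^N)\bmod{\ell}$ has $d_C$ distinct roots in $\FF_\ell$, which yields $d_C\leq d$; the reverse inequality is the hard step. Proving it requires ingredients you do not supply: first, a point count (Theorem~\ref{T:counting} combined with Lemma~\ref{L:split torus}(\ref{I:split torus ii})) showing that the locus of $t\in T$ where $\det(x^NI-Bt^N)$ has fewer than $d$ distinct roots has only $O(\ell^{r-1})$ rational points, hence some $t_1\in T(\FF_\ell)$ has a non-degenerate fiber; second, the divisibility $[H_\ell(\FF_\ell):\rho_{A,\ell}(\Gal_L)]\mid N$, so that $Bt_1^N\in\rho_{A,\ell}(\beta\Gal_L)$; and third, the Chebotarev density theorem over the number field $K$ (not inside $H_\ell(\FF_\ell)$) to realize $Bt_1^N$ as $\rho_{A,\ell}(\Frob_\p)$ for a prime $\p$ with $(\p,L/K)=C$, so that the maximality of $d_C$ --- which is a statement about the characteristic-zero polynomials $P_{A,\p}(x^N)$ --- can act and give $d\leq d_C$. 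Your phrase ``Chebotarev in $H_\ell(\FF_\ell)$'' never connects any element of $T(\FF_\ell)$ to a prime with Frobenius class $C$, so the maximality of $d_C$ is never actually usable.

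Moreover, once that paragraph is granted, the rest of your construction is superfluous: a completely split fiber over $\bbar{I}$ with all $d$ points in $W(\FF_\ell)$ is already the conclusion of the lemma with $t=I$, so the auxiliary variety $Z$, its dimension count, and the application of Theorem~\ref{T:counting} to $\bar Z$ do no work. They also carry their own problems: an $\FF_\ell$-point of $\bar Z$ does not by itself show that a top-dimensional geometrically irreducible component is defined over $\FF_\ell$ --- one needs the point to lie on a \emph{unique} such component, which is exactly the separate argument the paper runs in Lemma~\ref{L:component} --- and the claim $M\supseteq\QQ(\zeta_N)$ is unjustified, since $M$ is the splitting field of a single characteristic polynomial (though this is harmless, because the third condition defining $\calS$ already forces the relevant roots to lie in $\FF_\ell$).
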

\begin{proof}
By our choice of $\p_C$, the polynomial $P_{A,\p_C}(x^N)$ has degree $d_C$.  Our set $\calS$ was chosen so that the polynomial
\[
P_{A,\p_C}(x^N)\equiv \det(x^NI-\rho_{A,\ell}(\Frob_{\p_C})) =\det(x^N I -B) \in \FF_\ell[x]
\]
has $d_C$ distinct roots all of which belong to $\FF_\ell$.    In terms of our morphism $\varphi$, this shows that $\varphi^{-1}(\bbar{I})$ consists of $d_C$ points each belonging to $W(\FF_\ell)$.   So $d_C\leq d$ and it remains to prove equality.

Let $V$ be the subvariety of $T$ consisting of those $t\in T$ for which $\det(x^NI- Bt^N)$ has strictly less than $d$ distinct roots.       Using Lemma~\ref{L:split torus}(\ref{I:split torus ii}) and Theorem~\ref{T:counting}, we find that $|V(\FF_\ell)|=O(\ell^{r-1})$ where the implicit constant does not depend on $B$ or $\ell$.   Since $|T(\FF_\ell)|=(\ell-1)^r$, we find that for all sufficiently large $\ell\in \calS$, the set $T(\FF_\ell)-V(\FF_\ell)$ is non-empty; so there is a $t_1\in T(\FF_\ell)$ such that $\det(x^NI- Bt_1^N)\in \FF_\ell[x]$ has exactly $d$ distinct roots in $\FFbar_\ell$.   Since the index $[H_\ell(\FF_\ell): \rho_{A,\ell}(\Gal_L)]$ divides $N$, we find that $t_1^N$ lies in $\rho_{A,\ell}(\Gal_K)$, and hence $Bt_1^N$ belongs to $\rho_{A,\ell}(\beta\Gal_K)$.   By the Chebotarev density theorem, there is a prime $\p\nmid \ell$ for which $A$ has good reduction, $(\p,L/K)=C$, and $Bt_1^N$ is in the conjugacy class $\rho_{A,\ell}(\Frob_\p)$.  Since $P_{A,\p}(x^N) \equiv \det(x^NI - Bt_1^N) \bmod{\ell}$ has $d$ distinct roots in $\FFbar_\ell$, the polynomial $P_{A,\p}(x^N)$ will have at least $d$ distinct roots in $\Qbar$.   From our definition of $d_C$, we deduce that $d\leq d_C$.    Therefore, $d=d_C$ as claimed.
\end{proof}

\begin{lemma} \label{L:component}
For $\ell \in \calS$ sufficiently large, each irreducible components of $W_{\FFbar_\ell}$ has dimension $r-1$ and is defined over $\FF_\ell$.
\end{lemma}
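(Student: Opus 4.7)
\emph{Plan.} The proof has two parts: establishing pure dimension $r-1$, and then showing each irreducible component of $W_{\FFbar_\ell}$ is $\Gal_{\FF_\ell}$-stable. Both rest on the finiteness of the morphism $\varphi\colon W\to T/\GG_m$ and on the $\FF_\ell$-rational etale fiber produced by Lemma~\ref{L:rational fiber}. The main obstacle will be the descent step, which requires tying the components of $W_{\FFbar_\ell}$ to the finitely many preimages of a chosen rational point.

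For the dimension statement, $W$ is cut out in the irreducible $r$-dimensional torus $T$ by the single equation $\det(I-Bt^N)=0$. Krull's Hauptidealsatz then gives pure dimension $r-1$ as soon as this equation is not identically zero on $T$; if it were, then $W=T$ and $\varphi$ would have $\GG_m$-fibers, contradicting its finiteness (already recorded above). In the other direction, each component $W_i$ of $W_{\FFbar_\ell}$ maps by a finite morphism into the $(r-1)$-dimensional irreducible variety $T/\GG_m$, so $\dim W_i\le r-1$.

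For the descent statement, I would invoke Lemma~\ref{L:rational fiber} to obtain a point $\bbar{t}_0\in (T/\GG_m)(\FF_\ell)$ whose fiber is a set $\{x_1,\dots,x_d\}\subset W(\FF_\ell)$ of $d=\deg\varphi$ distinct points. Since the fiber has reduced length equal to $\deg\varphi$, the morphism $\varphi$ is etale at each $x_j$; combined with smoothness of the torus $T/\GG_m$, this forces $W$ itself to be smooth at $x_j$, so each $x_j$ lies on a unique irreducible component. Writing $W_1,\dots,W_m$ for the irreducible components of $W_{\FFbar_\ell}$, each restriction $\varphi|_{W_i}\colon W_i\to T/\GG_m$ is a finite surjection of irreducible varieties of equal dimension, so each $W_i$ meets $\varphi^{-1}(\bbar{t}_0)$ in at least one $x_j$. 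The action of $\Gal_{\FF_\ell}$ on $\{W_1,\dots,W_m\}$ must then be trivial: if $\sigma(W_i)=W_j$ and $x_k\in W_i$, then $\sigma(x_k)=x_k$ (since $x_k\in W(\FF_\ell)$) lies in $W_j$, and the uniqueness of the component through $x_k$ forces $W_i=W_j$. Standard descent then gives each $W_i$ defined over $\FF_\ell$, completing the argument.
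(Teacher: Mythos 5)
Your overall strategy mirrors the paper's: produce, via Lemma~\ref{L:rational fiber}, a fiber of $\varphi$ consisting of $d$ distinct $\FF_\ell$-points, show each of these points lies on a \emph{unique} irreducible component of $W_{\FFbar_\ell}$, observe that every component meets this fiber, and conclude because Galois fixes each rational fiber point and hence each component. The dimension count and the final descent step are fine. The gap is in the pivotal step: you claim that since the fiber over $\bbar{t}_0$ has $d=\deg\varphi$ distinct points, $\varphi$ is \'etale at each of them, hence $W$ is smooth there. For a finite morphism that is not flat this implication is simply false: for instance, take a smooth degree-$2$ cover of a smooth surface and pinch it at one point of an unramified fiber (replace the local ring there by a proper subring with the same fraction field and bijective normalization); the resulting finite degree-$2$ morphism onto a smooth surface has a set-theoretic fiber of two distinct points through the pinched singular point, where it is certainly not \'etale. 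The correct statement --- every scheme-theoretic fiber has length $\deg\varphi$, so a fiber with $\deg\varphi$ distinct points is reduced, and flat plus unramified equals \'etale --- requires flatness of $\varphi$, which you never address; note also that Lemma~\ref{L:rational fiber} gives only a set-theoretic count, not a reduced scheme-theoretic fiber.

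The gap is fillable in your framework, but the filling is a genuine ingredient: $W$ is cut out of the smooth irreducible torus $T$ by a single equation, hence is Cohen--Macaulay (after replacing $\det(I-Bt^N)$ by its squarefree part so that $W$ is reduced and its flat degree really is the $d$ of Lemma~\ref{L:rational fiber}); since you have shown $W$ is pure of dimension $r-1=\dim T/\GG_m$ and $T/\GG_m$ is smooth, miracle flatness makes the finite morphism $\varphi$ flat, and then your \'etale/smoothness argument goes through. The paper instead avoids flatness entirely: writing $d_i$ for the degree of $\varphi_i=\varphi|_{W_i}\colon W_i\to T/\GG_m$, it uses that $|\varphi_i^{-1}(v)|\leq d_i$ for every $v$ (normality of $T/\GG_m$ suffices) and proves $d=\sum_i d_i$ by evaluating at a point outside the closure of $\varphi\bigl(\bigcup_{i\neq j}W_i\cap W_j\bigr)$; a fiber with $d$ distinct points is then forced to be the disjoint union of the $\varphi_i$-fibers, which is exactly the ``unique component through each fiber point'' statement, with no smoothness or flatness needed. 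Either repair is acceptable, but as written your key inference is unproved and, in the stated generality, untrue.
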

\begin{proof}
Let $W_1,\ldots, W_m$ be the irreducible components of $W_{\FFbar_\ell}$.  Each component $W_i$ has dimension $r-1$ by Krull's Hauptidealsatz.   So it remains to show that all of the $W_i$ are defined over $\FF_\ell$, at least for $\ell$ sufficiently large.   

Set $V:=(T/\GG_m)_{\FFbar_\ell}$.  For each $1\leq i \leq m$, let $\varphi_i$ be the morphism $\varphi|_{W_i}\colon W_i \to V$.    The morphism $\varphi_i$ is a cover (possibly ramified; one could make it \'etale by replacing $W_i$ and $V$ by Zariski open subsets).   Let $d_i$ be the degree of $\varphi_i$.  Let $Z$ be the Zariski closure of $\varphi(\bigcup_{i\neq j} W_i \cap W_j)$ in $V$.  Using that the $\varphi_i$ are covers, one can show that $Z\neq V$.   So for a generic $v\in V(\FFbar_\ell)$ outside $Z$, we have a disjoint union $\varphi^{-1}(v)=\bigcup_i \varphi_i^{-1}(v)$ with $d=|\varphi^{-1}(v)|$ and $d_i=|\varphi_i^{-1}(v)|$.  Therefore, $d=\sum_i d_i$.

Assuming $\ell\in\calS$ is sufficiently large, we can fix an element $t\in T(\FF_\ell)$ satisfying the conditions of Lemma~\ref{L:rational fiber}.   By our choice of $t$, the fiber $\varphi^{-1}(\bbar{t})=\bigcup_i \varphi_i^{-1}(\bbar{t})$ has $d$ distinct elements.   Since $|\varphi_i^{-1}(\bbar{t})|\leq d_i$ for each $1\leq i \leq m$ and $d=\sum_i d_i$, we deduce that $\varphi^{-1}(\bbar{t})$ is the disjoint union of the sets $\varphi_i(\bbar{t})$ and each $\varphi_i^{-1}(\bbar{t})$ consists of $d_i$ distinct elements.    The disjointness implies that each point in $\varphi^{-1}(\bbar{t})$ lies in a unique irreducible component of $W_{\FFbar_\ell}$.

Fix $1\leq i \leq m$.   Choose a point $w_i \in \varphi_i^{-1}(\bbar{t})$ (such a point exists since $\varphi_i^{-1}(\bbar{t})$ consists of $d_i\geq 1$ elements).   We have $w_i \in W(\FF_\ell)$ by our choice of $t$, so $w_i=\sigma(w_i) \in \sigma(W_i)$ for all $\sigma \in \Gal_{\FF_\ell}$.   Since $W_i$ is the unique irreducible component of $W_{\FFbar_\ell}$ that contains $w_i$, we deduce that $\sigma(W_i)=W_i$ for all $\sigma\in\Gal_{\FF_\ell}$ and hence $W_i$ is defined over $\FF_\ell$ as claimed.
\end{proof}

 By taking $\ell\in \calS$ sufficiently large, we may assume by Lemma~\ref{L:component} that all of the irreducible components of $W_{\FFbar_\ell}$ are defined over $\FF_\ell$ (by adjusting $c$ appropriately, it is easy to verify Lemma~\ref{L:technical} for the finitely many excluded primes).   {}From Lemma~\ref{L:split torus}(\ref{I:split torus ii}) and our choice of $\calS$, the split torus $T$ (viewed as a closed subvariety of $\AA^n_{\FF_\ell})$ is defined by a bounded number of equations of bounded degree (that is, bounded independent of the choice of $B$ and $\ell\in \calS$).   Theorem~\ref{T:counting} thus implies that 
\begin{equation} \label{E: lower achieved}
|\{t\in T(\FF_\ell) : \det(I-B t^N)=0\}|=|W(\FF_\ell)| \geq \ell^{r-1} + O(\ell^{r-3/2})
\end{equation}
where the implicit constant does not depend on the choice of $B$ or $\ell$.   

\begin{lemma} \label{L:count irregular}
For $\ell \in \calS$, we have $|\{t\in T(\FF_\ell): t \text{ is not regular in } H_\ell \}| = O(\ell^{r-1})$ where the implicit constant depends only on $r$.
\end{lemma}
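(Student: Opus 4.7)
The plan is to use the classical characterization of regularity: a semisimple element $t$ of a maximal torus $T$ of the reductive group $H_\ell$ is regular in $H_\ell$ if and only if $\alpha(t) \neq 1$ for every root $\alpha$ of $(H_\ell,T)$. Since every element of the torus $T$ is semisimple, this yields the set equality
\[
\{t \in T(\FF_\ell) : t \text{ is not regular in } H_\ell\} \;=\; \bigcup_{\alpha \in \Phi} \ker(\alpha)(\FF_\ell),
\]
where $\Phi \subset X(T)$ denotes the root system of $(H_\ell,T)$. The task thus reduces to bounding both $|\Phi|$ and each $|\ker(\alpha)(\FF_\ell)|$ in terms of $r$ alone.

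For the first bound, $\Phi$ coincides with the root system of the semisimple derived group of $H_\ell$, whose rank is at most $r$; since there are only finitely many reduced root systems of any given rank, $|\Phi|$ is bounded by a constant $N_r$ depending only on $r$. For the second bound, I identify $T \cong \GG_{m,\FF_\ell}^r$ (possible since $T$ is split of rank $r$) and write the character $\alpha \in X(T)$ as $(x_1,\ldots,x_r) \mapsto x_1^{a_1}\cdots x_r^{a_r}$. A direct computation using that $\FF_\ell^*$ is cyclic of order $\ell - 1$ gives
\[
|\ker(\alpha)(\FF_\ell)| \;=\; \gcd(d,\ell - 1)\cdot(\ell - 1)^{r - 1} \;\leq\; d\cdot (\ell - 1)^{r-1}, \qquad d := \gcd(a_1,\ldots,a_r).
\]
The key observation is that $d \leq 2$ whenever $\alpha$ is a root: writing $\alpha = d\beta$ with $\beta \in X(T)$ primitive, and pairing with the coroot $\alpha^\vee$, one obtains $\langle \beta, \alpha^\vee\rangle = 2/d$, which must be an integer since $\beta \in X(T)$, forcing $d \in \{1,2\}$.

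Combining these estimates over the roots $\alpha \in \Phi$ gives
\[
|\{t \in T(\FF_\ell) : t \text{ is not regular in } H_\ell\}| \;\leq\; 2 N_r (\ell - 1)^{r-1} \;=\; O(\ell^{r-1}),
\]
with the implicit constant depending only on $r$. The argument poses no real obstacle once one recognizes the decomposition into kernels of roots; the only subtlety is the bound $d \leq 2$, which is precisely what allows the estimate to be independent of the specific embedding $H_\ell \hookrightarrow \GL_{2g,\FF_\ell}$---without it, the character exponents $a_i$ could a priori be unbounded in $r$, spoiling the uniformity.
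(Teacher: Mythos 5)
Your proof is correct, and it shares the paper's skeleton --- both arguments start from the same criterion that a (necessarily semisimple) $t\in T(\FF_\ell)$ fails to be regular exactly when it lies in $\bigcup_{\alpha\in\Phi}(\ker\alpha)(\FF_\ell)$, and both bound $|\Phi|$ by the finiteness of reduced root systems of rank at most $r$. Where you diverge is in the key step of bounding each $|(\ker\alpha)(\FF_\ell)|$. The paper argues through the component group: the identity component of $\ker\alpha$ is a split torus of rank $r-1$, and the number of components $m_\alpha$ is controlled by the torsion of $X(T)/\ZZ\alpha$, which is compared (via the sublattice generated by $\Phi$ and central characters, citing Carter 1.11) to data depending only on the root datum, hence only on the type and so on $r$; this yields boundedness but no explicit constant. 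You instead exploit the splitness of $T$ to compute $|(\ker\alpha)(\FF_\ell)|=\gcd(d,\ell-1)(\ell-1)^{r-1}$ directly, with $d$ the divisibility of $\alpha$ in $X(T)$, and then observe that pairing with the coroot forces $d\mid\langle\alpha,\alpha^\vee\rangle=2$, so each kernel contributes at most $2(\ell-1)^{r-1}$ points. This is more elementary and self-contained (it replaces the lattice-index argument and the reference to Carter by a one-line root-datum computation) and it gives an explicit constant $2N_r$; in effect you prove $m_\alpha\le 2$, which is sharper than what the paper records. Both routes are valid; yours buys explicitness and avoids external references, while the paper's phrasing generalizes without needing the explicit coordinates on a split torus. (Incidentally, the paper's statement of the regularity criterion contains a typo --- it should read $\alpha(t)\neq 1$ for all $\alpha\in\Phi$, as you have it.)
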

\begin{proof}
Let $\Phi=\Phi(T,H_\ell)$ be the set of \emph{roots of $H_\ell$ relative to $T$}, see \cite{MR1102012}*{8.17}.   The roots $\Phi$ are a finite subset of the group $X(T)$ of characters $T_{\FFbar_\ell}\to \GG_{m,\FFbar_\ell}$; the characters of $T$ are actually defined over $\FF_\ell$ since $T$ is split.    By \cite{MR1102012}*{12.2}, an element $t\in T(\FF_\ell)$ is regular if and only if $\alpha(t)=1$ for all $\alpha \in \Phi$.   We thus have
\[
\{t\in T(\FF_\ell): t \text{ is not regular in } H_\ell \} = \bigcup_{\alpha\in \Phi} (\ker \alpha)(\FF_\ell).
\]
Each $\alpha\in \Phi$ is a non-trivial character defined over $\FF_\ell$, so the irreducible component of $\ker \alpha$ containing the identity is a split torus of rank $r-1$ defined over $\FF_\ell$.  Therefore, $|(\ker \alpha)(\FF_\ell)| \leq m_\alpha (\ell-1)^{r-1}$ where $m_\alpha$ is the number of irreducible components of $(\ker \alpha)_{\FFbar_\ell}$, and hence
\begin{equation} \label{E:irreg}
|\{t\in T(\FF_\ell): t \text{ is not regular in } H_\ell \}|\leq \sum_{\alpha \in \Phi} m_\alpha (\ell-1)^{r-1}\leq |\Phi| \, \big(\max_{\alpha \in \Phi} m_\alpha\big) (\ell-1)^{r-1}.   
\end{equation}

Fix a root $\alpha\in \Phi$.  Let $X(\ker \alpha)$ be the group of characters $(\ker\alpha)_{\FFbar_\ell}\to \GG_{m,\FFbar_\ell}$.  The cardinality of the torsion subgroup of $X(\ker\alpha)$ is divisible by $m_\alpha$.  The exact sequence, $1\to \ker\alpha \to T \xrightarrow{\alpha} \GG_m \to 1$, induces an exact sequence
\[
0\to \ZZ \xrightarrow{1\mapsto \alpha} X(T) \to X(\ker\alpha)\to 0
\]
and hence we have an isomorphism $X(T)/\ZZ\alpha \cong X(\ker \alpha)$.   The subgroup $\Psi$ of $X(T)$ generated by the roots $\Phi$ and a basis of characters of the center of $H_\ell$ is of bounded index, the bound depending only on the type of $H_\ell$ (see \cite{MR794307}*{1.11}).   So the order of the torsion subgroup of $X(\ker \alpha)$ varies with that of $\Psi/\ZZ\Phi$ by only a finite amount, which depends only on the type of $H_\ell$.   Since the group $\Psi/\ZZ\Phi$ and set $\Phi$ depend only on the root datum of $H_\ell$, we deduce that $|\Phi| \max_{\alpha \in \Phi} m_\alpha$ can be bounded in terms of the type of $H_\ell$ alone, and hence also in terms of the rank $r$.  The lemma follows by combining this with (\ref{E:irreg}). 
\end{proof}

Let $D$ be the set of $t\in T(\FF_\ell)$ for which $t^N$ is not regular in $H_\ell$.    For each $t' \in T(\FF_\ell)$, there are at most $N^r$ elements $t\in T(\FF_\ell)$ for which $t^N=t'$.     Thus by Lemma~\ref{L:count irregular}, we have
\begin{equation} \label{E:irregular}
|D| \leq N^r |\{t'\in T(\FF_\ell): t' \text{ is not regular in } H_\ell \}| = O(\ell^{r-1})
\end{equation}
where the implicit constant depends only on $r$ and $N$.   The group $\GG_m(\FF_\ell)=\FF_\ell^\times$ acts on $D$ since $\GG_m\subseteq T$.  For each $t\in D$, there are at most $d$ values of $\lambda\in \FF_\ell^\times$ such that $\lambda t\in W(\FF_\ell)$.   Therefore,
\begin{equation} \label{E:irregular W}
|\{t\in W(\FF_\ell): t^N \text{ not regular in $H_\ell$}\}|\leq d  |D|/(\ell-1) =O(\ell^{r-2})
\end{equation}
where the last equality follows from (\ref{E:irregular}) and the implicit constant depends only on $r$, $N$ and $d$.    The lemma now follows by combining (\ref{E: lower achieved}) and (\ref{E:irregular W}).

\bibliographystyle{plain}

\begin{bibdiv}
\begin{biblist}

\bib{MR1102012}{book}{
      author={Borel, A.},
       title={Linear algebraic groups},
     edition={Second},
      series={Graduate Texts in Mathematics},
   publisher={Springer-Verlag},
     address={New York},
        date={1991},
      volume={126},
}

\bib{MR794307}{book}{
      author={Carter, R.~W.},
       title={Finite groups of {L}ie type},
      series={Pure and Applied Mathematics (New York)},
   publisher={John Wiley \& Sons Inc.},
     address={New York},
        date={1985},
        note={Conjugacy classes and complex characters, A Wiley-Interscience
  Publication},
}

\bib{MR601520}{article}{
      author={Deligne, P.},
       title={La conjecture de {W}eil. {II}},
        date={1980},
     journal={Inst. Hautes \'Etudes Sci. Publ. Math.},
      number={52},
       pages={137\ndash 252},
}

\bib{MR0463174}{book}{
      author={Deligne, P.},
       title={Cohomologie \'etale},
      series={Lecture Notes in Mathematics, Vol. 569},
   publisher={Springer-Verlag},
     address={Berlin},
        date={1977},
        note={S{\'e}minaire de G{\'e}om{\'e}trie Alg{\'e}brique du Bois-Marie
  SGA 4 1/2, Avec la collaboration de J. F. Boutot, A. Grothendieck, L. Illusie
  et J. L. Verdier},
}

\bib{MR0337997}{article}{
      author={Frey, G.},
      author={Jarden, M.},
       title={Approximation theory and the rank of abelian varieties over large
  algebraic fields},
        date={1974},
     journal={Proc. London Math. Soc. (3)},
      volume={28},
       pages={112\ndash 128},
}

\bib{MR868860}{book}{
      author={Fried, M.~D.},
      author={Jarden, M.},
       title={Field arithmetic},
      series={Ergebnisse der Mathematik und ihrer Grenzgebiete (3)},
   publisher={Springer-Verlag},
     address={Berlin},
        date={1986},
      volume={11},
}

\bib{MR2111902}{article}{
      author={Geyer, W.-D.},
      author={Jarden, M.},
       title={Torsion of abelian varieties over large algebraic fields},
        date={2005},
     journal={Finite Fields Appl.},
      volume={11},
      number={1},
       pages={123\ndash 150},
}

\bib{MR516151}{article}{
      author={Geyer, W.-D.},
      author={Jarden, M.},
       title={Torsion points of elliptic curves over large algebraic extensions
  of finitely generated fields},
        date={1978},
     journal={Israel J. Math.},
      volume={31},
      number={3-4},
       pages={257\ndash 297},
}

\bib{MR1831454}{article}{
      author={Jacobson, M.},
      author={Jarden, M.},
       title={Finiteness theorems for torsion of abelian varieties over large
  algebraic fields},
        date={2001},
     journal={Acta Arith.},
      volume={98},
      number={1},
       pages={15\ndash 31},
}

\bib{MR1803934}{article}{
      author={Katz, N.~M.},
       title={Sums of {B}etti numbers in arbitrary characteristic},
        date={2001},
     journal={Finite Fields Appl.},
      volume={7},
      number={1},
       pages={29\ndash 44},
}

\bib{MR1441234}{article}{
      author={Larsen, M.},
      author={Pink, R.},
       title={A connectedness criterion for {$l$}-adic {G}alois
  representations},
        date={1997},
     journal={Israel J. Math.},
      volume={97},
       pages={1\ndash 10},
}

\bib{MR1730973}{book}{
      author={Serre, J.-P.},
       title={{\OE}uvres. {C}ollected papers. {IV}},
   publisher={Springer-Verlag},
     address={Berlin},
        date={2000},
        note={1985--1998},
}

\bib{MR965792}{article}{
      author={Serre, J.-P.},
       title={R\'esum\'e des cours de 1985-1986},
        date={1986},
     journal={Annnuaire du Coll\`ege France},
       pages={95\ndash 100},
        note={(={\OE}uvres. {C}ollected papers. {IV}, 33--37)},
}

\bib{MR1944805}{article}{
      author={Wintenberger, J.-P.},
       title={D\'emonstration d'une conjecture de {L}ang dans des cas
  particuliers},
        date={2002},
     journal={J. Reine Angew. Math.},
      volume={553},
       pages={1\ndash 16},
}

\bib{MR885780}{article}{
      author={Zarhin, Yu.~G.},
       title={Endomorphisms and torsion of abelian varieties},
        date={1987},
     journal={Duke Math. J.},
      volume={54},
      number={1},
       pages={131\ndash 145},
}

\end{biblist}
\end{bibdiv}

\end{document}